\documentclass{amsart}

\usepackage{amsmath}
\usepackage{amsxtra}
\usepackage{amscd}
\usepackage{amsthm}
\usepackage{amsfonts}
\usepackage{amssymb}
\usepackage{color}
\usepackage{eucal}
\usepackage[all]{xy}
\usepackage{graphicx}
\usepackage{mathabx}
\usepackage{hyperref}
\usepackage{epsfig}


\newtheorem{theorem}{Theorem}[section]
\newtheorem*{theorem*}{Theorem}
\newtheorem{corollary}[theorem]{Corollary}

\newtheorem{lemma}[theorem]{Lemma}
\newtheorem{proposition}[theorem]{Proposition}
\newtheorem*{proposition*}{Proposition}

\newtheorem*{conjecture*}{Conjecture}

\newtheorem{thm-dfn}[theorem]{Theorem-Definition}
\newtheorem{clm-dfn}[theorem]{Claim-Definition}
\newtheorem{claim}[theorem]{Claim}

\theoremstyle{definition}

\newtheorem{remark}[theorem]{Remark}

\numberwithin{equation}{section}


\newcommand{\quash}[1]{}  

\newcommand{\bs}{\backslash}

\newcommand{\frakm}{{\mathfrak m}}
\newcommand{\frakn}{{\mathfrak n}}

\newcommand{\bbA}{{\mathbb A}}

\newcommand{\bbC}{{\mathbb C}}

\newcommand{\bbG}{{\mathbb G}}

\newcommand{\bbL}{{\mathbb L}}

\newcommand{\bbP}{{\mathbb P}}
\newcommand{\bbQ}{{\mathbb Q}}
\newcommand{\bbR}{{\mathbb R}}

\newcommand{\bbU}{{\mathbb U}}

\newcommand{\bbZ}{{\mathbb Z}}

\newcommand{\calA}{{\mathcal A}}

\newcommand{\calD}{{\mathcal D}}

\newcommand{\calH}{{\mathcal H}}
\newcommand{\calI}{{\mathcal I}}

\newcommand{\calL}{{\mathcal L}}
\newcommand{\calM}{{\mathcal M}}

\newcommand{\calO}{{\mathcal O}}
\newcommand{\calP}{{\mathcal P}}

\newcommand{\Hom}{\textnormal{Hom}}
\newcommand{\End}{\textnormal{End}}

\newcommand{\Fun}{\textnormal{Fun}}

\newcommand{\Vect}{\textnormal{Vect}}
\newcommand{\op}{\textnormal{op}}

\newcommand{\colim}{{\textnormal{colim}}}
\newcommand{\Ker}{{\textnormal{Ker}}}
\newcommand{\Image}{{\textnormal{Im}}}

\newcommand{\Alg}{{\textnormal{Alg}}}
\newcommand{\Aff}{{\textnormal{Aff}}}

\newcommand{\pc}{{\textnormal{pc}}}
\newcommand{\fl}{{\textnormal{fl}}}
\newcommand{\sm}{{\textnormal{sm}}}
\newcommand{\alg}{{\textnormal{alg}}}

\newcommand{\PD}{{\textnormal{PD}}}

\setcounter{section}{0}

\begin{document}

\title[A spherical $p$-adic Paley-Wiener theorem and Bernstein morphisms]{A Paley-Wiener theorem for spherical $p$-adic spaces and Bernstein morphisms}
\author{Alexander Yom Din}

\maketitle

\begin{abstract}
	Let $G$ be (the rational points of) a connected reductive group over a local non-archimedean field $F$. In this article we formulate and prove a property of an $F$-spherical homogeneous $G$-space (which in addition satisfies the finite multiplicity property, which is expected to hold for all $F$-spherical homogeneous $G$-spaces) which we call the Paley-Wiener property. This is much more elementary, but also contains much less information, than the recent relevant work of Delorme, Harinck and Sakellaridis (however, it holds for a wider class of spaces). The property results from a parallel categorical property. We also discuss how to define Bernstein morphisms via this approach.
\end{abstract}

\tableofcontents


\section{Introduction}

\subsection{Overview}

\subsubsection{}

The prototype of a Paley-Wiener problem is the following. Consider the functions $$ \phi_z \in C^{\infty}(\bbR), \quad \phi_z (x) := e^{2 \pi i xz}$$ for $z \in \bbC$. Given a smooth function with compact support $f \in C^{\infty}_c (\bbR)$, one can define numbers (the Fourier transform) $$c_z := \int_{\bbR} f(x) \cdot \phi_z (x) \cdot dx$$ for $z \in \bbC$. The Paley-Wiener problem seeks to characterize collections $(c_z)_{z \in \bbC}$ for which there exists a smooth function with compact support $f \in C^{\infty}_c (\bbR)$ with the relation above.

\subsubsection{}

Let us now switch to a non-archimedean local field $F$ (such as the field $\bbQ_p$ of $p$-adic numbers), rather than the field $\bbR$ of real numbers, and to the multiplicative group $F^{\times}$ rather than the additive group $F$. Functions we consider are smooth $\bbC$-valued functions, where ``smooth" now means ``locally constant". Analogously to the functions $(\phi_z)_{z \in \bbC}$ above, one considers the smooth characters $F^{\times} \to \bbC^{\times}$. Given a smooth character $\chi_1$, we obtain a whole family of such, considering for $z \in \bbC^{\times}$ the character $\chi_z (x) :=  \chi_1 (x) \cdot z^{v(x)}$ (where $v : F^{\times} \to \bbZ$ stands for the valuation). Therefore, given a smooth function with compact support $f \in C^{\infty}_c (F^{\times})$, we define again numbers $$ c_{\chi_z} := \int_{F^{\times}} f(x) \cdot \chi_z (x) \cdot dx.$$ The analog of the Paley-Wiener problem will ask to characterize collections $(c_{\chi_z})$ originating from $f$ as above. The main condition for that turns out to be that $z \mapsto c_{\chi_z}$ should be an algebraic ($=$regular) function on $\bbC^{\times}$.

\subsubsection{}

We now generalize and consider, instead of $F^{\times}$, the group $G$ of $F$-points of a connected reductive group over $F$, such as $GL_n (F)$. The analogue of the above collcetion of characters is the space $\calA (G)$ of matrix coefficients of smooth (complex) $G$-representations of finite length (roughly speaking, those are functions all of whose translates span a representation close to being irreducible, so again ``eigen-functions" of some sort). One again wants to ask, given a functional on the space $\calA (G)$, what conditions it should satisfy in order to be equal to integration against a smooth function with compact support on $G$. Such a Paley-Wiener theorem was established by Bernstein (\cite[\S 5.2]{BeRu}) and Heiermann (\cite{He}). The main condition is, similarly to above, that by evaluating the functional on some specific families of functions in $\calA (G)$, parametrized by complex tori - families of matrix coefficients of parabolic induction - one should obtain algebraic functions on the tori.

\subsubsection{}

Further generalizing, one considers an $F$-spherical right homogeneous $G$-space $H \backslash G$ (here $H \subset G$ is a closed subgroup). Spherical homogeneous spaces are an important class of homogeneous spaces for which, it is understood today, results and theories similar to ones for the group $G$ itself should/can be obtained (this might be called the ``relative point of view" in this context). One again can consider the space of ``eigen-functions" $\calA (H \backslash G)$, and ask for conditions on a functional on that space to be equal to integration again a smooth function with compact support. The paper \cite{DeHaSa} has dealt with the Paley-Wiener problem in this context. Important ingredients are boundary degenerations and Bernstein morphisms, which morally provide families of functions on which to test our functional.

\subsubsection{}

In this article, we would like to state a Paley-Wiener result for an $F$-spherical homogeneous $G$-space which is more elementary, and less precise, than the results of \cite{DeHaSa} (but is valid for a larger class of spaces). The main idea is, roughly speaking, that instead of studying boundary degenerations and Bernstein morphisms in order to produce specific families of functions in $\calA (H \backslash G)$ to test on, we will allow arbitrary families (hence, a bit similarly to defining a manifold structure on a set abstractly, without constructing specific coordinate charts). However, unfolding our proof, one sees that implicitly we still use specific families, namely the parabolic induction families for $G$ itself, as above. So, intuitively speaking, the idea is that the sphericity of $H \backslash G$ provides that the situation for it is ``covered well" by the situation for $G$ itself, even if one does not know finer information.

\subsubsection{}

For our result, we have to assume that our $F$-spherical homogeneous $G$-space satisfies the finite multiplicity property (which is expected to hold for all $F$-spherical $G$-spaces), and use in addition the ``local finite-generation" property of such spaces, given by \cite[Theorem I]{AiGoSa}.

\subsubsection{}

We will also describe how to construct Bernstein morphisms using the technique of this article (Bernstein morphisms were defined in works \cite{BeKa} and (in greater generality) \cite{SaVe}, being called in the former ``Bernstein maps" and in the latter ``asymptotics maps", as the name ``Bernstein maps" is reserved there for the $L^2$-version). We provide a ``preparation", so to speak, by defining what constitutes a suitable degeneration along which we then define the Bernstein morphism. The detailed discussion of the relation to a parametrization of degenerations (say in terms of subsets of the set of spherical roots) we hope to provide elsewhere.

\subsection{Contents}

In this description of the contents of the article, we allow ourselves to be loose regarding the actual technicalities.

\subsubsection{}

In \S\ref{section abstract PW}, we will formulate a property of a linearly topologized vector space $\calA$, which we call the Paley-Wiener property. Roughly, we first define what is an algebraic mapping from an affine variety $Z$ to $\calA$, and then say that a functional on $\calA$ is algebraic if, given any algebraic mapping to $\calA$ from any smooth $Z$, composing it with the functional one obtains an algebraic function on $Z$. Then we say that $\calA$ is Paley-Wiener if every algebraic functional on it is continuous.

\subsubsection{}

In \S\ref{section sph PW}, we will formulate the main theorem, which states that if $H \backslash G$ is an $F$-spherical $G$-space and it has the finite multiplicity property (which is expected to hold for all $F$-spherical $G$-spaces), then $\calA (H \backslash G)$ is Paley-Wiener (in the sense of \S\ref{section abstract PW}). Unfolded, this gives a criterion for a functional on $\calA (H \backslash G)$ to be equal to integration against a smooth distribution with compact support on $H \backslash G$. As we mentioned above, the criterion is abstract, in a sense - one considers arbitrary algebraic families of functions in $\calA(H \backslash G)$.

\subsubsection{}

In \S\ref{sec categorical}, we consider a categorical conception related to the Paley-Wiener properties above. Namely, given an abelian category $\calM$ and functors $C,D$ from $\calM$ to the category of vector spaces, we define the property of a morphism $$D|_{\calM^{\fl}} \to C|_{\calM^{\fl}}$$ to be algebraic (here $\calM^{\fl} \subset \calM$ stands for the full subcategory of objects of finite length), which morally means that it interacts well with algebraic families of objects of finite length. We are then interested in conditions for the image of the restriction map $$ \Hom (D , C) \to \Hom (D|_{\calM^{\fl}} , C|_{\calM^{\fl}})$$ to coincide with the subspace of algebraic morphisms.

\subsubsection{}

In \S\ref{sec proof of main thm} we prove the main theorem (that of \S\ref{section sph PW}, regarding the Paley-Wiener property of an $F$-spherical homogeneous $G$-space), recasting it into the formalism of \S\ref{sec categorical}, and using theory based on Bernstein's decomposition of the category of smooth $G$-modules with respect to the center.

\subsubsection{}

In \S\ref{sec Bernstein morphisms} we describe  a setup for degenerations of $F$-spherical subgroups and the construction of Bernstein morphisms along such degenerations, using the technique of this article.

\subsubsection{}

In \S\ref{sec remaining proofs} we provide proofs for statements which were left without proof earlier in the article.

\subsubsection{}

In \S\ref{sec algebraic lemma} we give technical algebraic propositions, on reconstruction of a global section of a coherent sheaf on an affine variety from sections on closed subvarieties, which are used in the proof of the main proposition of \S\ref{sec categorical}.

\subsection{A few general notations and conventions}

\subsubsection{}

All objects (vector spaces, algebras, function spaces, and so on) will be over $\bbC$.

\subsubsection{}

$\bbG$ will denote a connected reductive group over a local non-archimedean field $F$, and we denote $G := \bbG (F)$. $\calH_G$ will denote the Hecke algebra of smooth distributions with compact support on $G$. For an $\ell$-space $X$ we will denote by $C^{\infty} (X)$ the space of locally constant functions on $X$.

\subsubsection{}

$\Alg$ will denote the category of finitely generated $\bbC$-algebras. $\Aff$ will denote the category of affine schemes of finite type over $\bbC$. For $Z \in \Aff$, we will denote by $\calO_Z \in \Alg$ the corresponding algebra.

\subsubsection{}

Given $A \in \Alg$ and an ideal $I \subset A$, we will say that $I$ is cofinite if the dimension of the $\bbC$-vector space $A/I$ is finite.

\subsubsection{}

For a unital $\bbC$-algebra $R$, we will denote by $\calM (R)$ the $\bbC$-linear abelian category of left $R$-modules. We will denote by $\calM (G)$ the $\bbC$-linear abelian category of smooth left $G$-modules. When we say ``$G$-module", we mean a not necessarily smooth one. Given a (say, left) $G$-module $M$, we will denote by ${}^{\sm} M$ the submodule of smooth vectors, which forms a smooth $G$-module.

\subsection{Acknowledgments}

This material is based upon work supported by the Institute for Advanced Study School of Mathematics. I would like to thank H\'el\`ene Esnault for a useful discussion regarding the contents of \S\ref{sec algebraic lemma}.


\section{The abstract Paley-Wiener property for linearly topologized vector spaces}\label{section abstract PW}

In this section we formulate an abstraction of the Paley-Wiener property that we wish to study.

\subsection{Linearly topologized vector spaces}

\subsubsection{}

An \textbf{LTVS} (here ``LTVS" stands for "Hausdorff linearly topologized vector space") is a Hausdorff topological $\bbC$-vector space - where $\bbC$ is considered with the discrete topology - for which the open vector subspaces form a basis of neighbourhoods of $0$ (and morphisms of LTVS are continuous linear maps). A finite dimensional vector space admits a unique LTVS topology (the discrete one), and hence the category of finite dimensional LTVS is the same as that of finite dimensional vector spaces.

\subsubsection{}

We will say that an LTVS is \textbf{pre-linearly compact} if all its open subspaces are of finite codimension. This is equivalent to the completion of it being linearly compact, following a common terminology. More concretely, this means that it can be realized as a (not necessarily closed) subspace of a product of finite-dimensional LTVS.

\subsection{Punctual distributions}

Throughout this subsection, we fix $Z \in \Aff$.

\subsubsection{}

A functional $\lambda : \calO_Z \to \bbC$ will be called a \textbf{punctual distribution} if it factors via $\calO_Z \to \calO_Z / I$ for some cofinite ideal $I \subset \calO_Z$. We denote by $$ \PD (Z) \subset \calO_Z^*$$ the subspace of punctual distributions.

\subsubsection{}

We have a tautological injective map $$ Z(\bbC) \hookrightarrow \PD(Z)$$ (because $Z(\bbC)$ can be interpreted as the space of algebra homomorphisms from $\calO_Z$ to $\bbC$).

\subsubsection{}\label{sssec algebraic PDstar}

We have a tautological injective linear map $$ \calO_Z \hookrightarrow \PD(Z)^*$$ (injectivity follows from Krull's intersection theorem). We say that a functional in $\PD(Z)^*$ is \textbf{algebraic} if it lies in the image of this map.

\subsubsection{}\label{sssec embed O in func}

Therefore, we have $$ \calO_Z \hookrightarrow \PD(Z)^* \to \{ \textnormal{functions on } Z(\bbC) \},$$ but the composition is not injective unless $Z$ is reduced.

\subsubsection{}

Suppose that $Z$ is smooth. Then given an algebraic differential operator $D$ on $Z$ and $z \in Z(\bbC)$, we have a punctual distribution $f \mapsto (Df)(z)$. Moreover, any punctual distribution is a finite sum of such.

\subsection{Algebraic mappings}

Throughout this subsection, we fix $Z \in \Aff$ and an LTVS $E$.

\subsubsection{}

We define an \textbf{algebraic mapping} from $Z$ to $E$ to be a linear map $$ \Phi : \PD(Z) \to E$$ for which the following conditions are satisfied:

\begin{enumerate}
    \item The image of $\Phi$ is pre-linearly compact.
    \item For every continuous functional $\alpha : E \to \bbC$, the composition $$\alpha \circ \Phi \in \PD(Z)^*$$ is algebraic (in the sense of \S\ref{sssec algebraic PDstar}).
\end{enumerate}

\subsubsection{}

If $E$ is finite dimensional, the space of algebraic mappings from $Z$ to $E$ can be identified with $\calO_Z \otimes E$.

\subsubsection{}\label{sssec alg map for reduced}

If $Z$ is reduced, the space of algebraic mappings from $Z$ to $E$ can be identified with the space of functions $\Phi : Z(\bbC) \to E$ for which the following conditions are satisifed:

\begin{enumerate}
    \item The span of the image of $\Phi$ is pre-linearly compact.
    \item For every continuous functional $\alpha : E \to \bbC$, the composition $$\alpha \circ \Phi : Z(\bbC) \to \bbC$$ is an algebraic function (i.e., belongs to $\calO_Z$ via the embedding of \S\ref{sssec embed O in func}).
    \item For every $\lambda \in \PD(Z)$, there exists $\phi_{\lambda} \in E$ such that for every continuous functional $\alpha : E \to \bbC$, one has $\alpha (\phi_{\lambda}) = \lambda(\alpha \circ \Phi)$, where here $\alpha \circ \Phi$ is considered as an element of $\calO_Z$, thanks to (2).
\end{enumerate}

\subsection{Algebraic functionals and Paley-Wiener property}

\subsubsection{}\label{sssec algebraic functional}

Let $E$ be a LTVS. A not necessarily continuous functional $\ell : E \to \bbC$ is said to be \textbf{algebraic} if for every smooth $Z \in \Aff$ and every algebraic mapping $\Phi : \PD(Z) \to E$, the composition $\ell \circ \Phi \in \PD (Z)^*$ is algebraic.

\medskip

Clearly continuous functionals are algebraic.

\begin{remark}
	One could also define a functional to be algebraic similarly, but running over all $Z \in \Aff$, not only smooth ones. We did not figure out what would be a better definition (or under what conditions those are the same). In our case of interest we will have the Paley-Wiener property (see next), which in particular gives that functionals algebraic in the chosen sense are also algebraic in the more restrictive sense.
\end{remark}

\subsubsection{}

Let $E$ be a LTVS. We say that $E$ is \textbf{Paley-Wiener} if every algebraic functional on $E$ is continuous.

\subsubsection{}\label{sssec def of Paley-Wiener}

Given an LTVS $F$ and a not necessarily closed subspace $E \subset F$, we will say that the inclusion $E \subset F$ is \textbf{Paley-Wiener} if $E$ is dense in $F$ and $E$ is Paley-Wiener. In other words, the inclusion $E \subset F$ is Paley-Wiener if algebraic functionals on $E$ are the same as continuous functionals on $F$.

\medskip

For intuition, the situation we think of is when $F$ is the space of ``all functions on a space" and $E$ is the subspace of ``eigen-functions". Then continuous functionals on $F$ are some kind of distributions, and the Paley-Wiener property says that functionals on $E$ which are algebraic, are equal to integration against a distribution.


\section{The Paley-Wiener property of a spherical $p$-adic space}\label{section sph PW}

In this section we will state the main theorem of the article, Theorem \ref{thm main theorem}. Throughout this section, we fix a homogeneous right $G$-space $X$.

\subsection{Function spaces}\label{ssec function spaces}

\subsubsection{}

We consider the left smooth $G$-module ${}^{\sm} C^{\infty}(X)$. We equip it with an LTVS structure as follows. For an open compact subgroup $K \subset G$, give ${}^K C^{\infty}(X) \cong C^{\infty}(X / K)$ the linearly compact LTVS topology making all point evaluations continuous. Then give ${}^{\sm} C^{\infty} (X)$ the colimit linear topology $$ {}^{\sm} C^{\infty} (X) = \underset{K \subset G}{\colim} \ {}^K C^{\infty} (X).$$ This linear topology is Hausdorff, since the subspaces of functions which vanish on a fixed compact are open, and their intersection is zero. Therefore this gives ${}^{\sm} C^{\infty} (X)$ an LTVS topology.

Notice that each element in $G$ acts on ${}^{\sm} C^{\infty} (X)$ in a continuous way.

\subsubsection{}

Let us denote by $\calD_{\pc} (X)$ the right $G$-module of continuous functionals on ${}^{\sm} C^{\infty} (X)$ (here ``$\pc$" stands for ``pro-compactly supported"). We have: $$ \calD_{\pc} (X) = \lim_{K \subset G} \ \calD_{c} (X/K)$$ where $\calD_c (X/K)$ denotes the space of distributions with compact support on the discrete space $X/K$. \quash{Given $\delta \in \calD_{\pc} (X)$ and $\phi \in {}^{\sm} C^{\infty} (X)$, one can suggestively write $\int_X \phi \cdot \delta$ for the evaluation $\delta(\phi)$.}

\subsubsection{}

Let us denote by $$\calA (X) \subset {}^{\sm} C^{\infty} (X)$$ the subspace consisting of functions $\phi$ for which the $G$-submodule spanned by $\phi$ is of finite length. Then $\calA (X)$ is a $G$-submodule of ${}^{\sm} C^{\infty} (X)$.

\subsection{Spherical spaces}

\subsubsection{}

We say that $X$ is \textbf{$F$-spherical} if given a minimal parabolic $\mathbb{P} \subset \mathbb{G}$, $\mathbb{P} (F)$ has finitely many orbits on $X$.

\subsubsection{}

We say that $X$ \textbf{has the finite multiplicity property} if for every $G$-equivariant local system $\calL$ of rank $1$ on $X$ and every $V \in \calM (G)$ of finite length, the space $\Hom_G (V , \Gamma (X , \calL))$ is finite-dimensional. It is enough to check this for irreducible $V \in \calM (G)$. By Frobenius reciprocity, choosing $x_0 \in X$ and setting $H := \textnormal{Stab}_G (x_0)$ (so that $X \cong H \backslash G$), $X$ has the finite multiplicity property if and only if for every $V \in \calM (G)$ of finite length (or irreducible), and every smooth character $\chi : H \to \bbC^{\times}$, the space of $(H,\chi)$-equivariant functionals $\Hom_H (V , \bbC_{\chi})$ is finite-dimensional. This is the same as requiring that the space of $\chi$-twisted $H$-coinvariants $$ {}_{(H,\chi)} V := V / \langle hv - \chi (h) v\rangle_{h \in H , v \in V}$$ is finite-dimensional.

\subsubsection{}

It is conjectured that if $X$ is $F$-spherical then $X$ has the finite multiplicity property, and this conjecture is known in a large amount of cases. See for example \cite[Theorem 5.1.5]{SaVe}, \cite[Theorem 4.5]{De}.

\subsection{The main theorem}

\subsubsection{}

We now state the main theorem of the article.

\begin{theorem}\label{thm main theorem}
    Let $X$ be a homogeneous right $G$-space which is $F$-spherical and has the finite multiplicity property. Then the inclusion $\calA (X) \subset {}^{\sm} C^{\infty}(X)$ is Paley-Wiener (in the sense of \S\ref{sssec def of Paley-Wiener}).
\end{theorem}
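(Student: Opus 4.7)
The plan is to translate the claim into a statement about natural transformations between certain functors on $\calM(G)$, and then prove that reformulated statement using Bernstein's decomposition of $\calM(G)$ by its center together with the finite multiplicity and local finite-generation hypotheses. Density of $\calA(X)$ in ${}^{\sm} C^{\infty}(X)$ is the easier half and should follow from showing that matrix coefficients of finite-length $G$-modules separate points of each discrete quotient $X/K$; this is a general property of spherical $p$-adic spaces with the finite multiplicity property, using for instance the constant function together with translates of parabolically induced matrix coefficients of generic unramified characters.

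For the substantive part, I would exploit the evaluation map
\[
V \otimes \Hom_G(V, {}^{\sm} C^{\infty}(X)) \longrightarrow \calA(X), \qquad v \otimes \varphi \mapsto \varphi(v),
\]
which realizes $\calA(X)$ as the union of its images over $V \in \calM(G)^{\fl}$. An algebraic functional $\ell$ on $\calA(X)$ then pairs with this map to produce, naturally in $V$, a linear map
\[
T_V \colon \Hom_G(V, {}^{\sm} C^{\infty}(X)) \longrightarrow V^{*},
\]
whose source is finite-dimensional by the finite multiplicity hypothesis. The collection $\{T_V\}$ assembles into a natural transformation between two functors on $\calM(G)^{\fl}$ of exactly the type studied in \S\ref{sec categorical}, and the condition that $\ell$ be algebraic translates cleanly into the algebraicity of this natural transformation in the sense of that section. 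Conversely, producing a continuous extension of $\ell$ to an element of $\calD_{\pc}(X)$ amounts to extending this natural transformation from $\calM(G)^{\fl}$ to all of $\calM(G)$, which is precisely the situation the categorical formalism of \S\ref{sec categorical} is designed to handle.

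To perform the extension I would work one Bernstein block $\calM(G)_{\fraks}$ at a time. Each block is equivalent to the category of modules over an algebra finite over its center $Z_{\fraks}$, which is the coordinate ring of a smooth affine variety, and parabolic induction supplies canonical algebraic families of finite-length objects that cover the block. Algebraicity of $T_{\bullet}$ then expresses that its values on these families arise from sections of coherent sheaves on the parameter varieties. The algebraic propositions of \S\ref{sec algebraic lemma} on reconstructing global sections of coherent sheaves from their restrictions to closed subvarieties should patch these pointwise data into the desired map of global (unrestricted) functors, yielding the continuous extension.

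The main obstacle I expect is precisely this patching step: the algebraicity of $\ell$ supplies information only along each smooth algebraic family of finite-length modules, whereas continuity demands coherent compatibility across all such families within a block. This is where the sheaf-theoretic reconstruction lemmas of \S\ref{sec algebraic lemma} and the local finite-generation theorem of \cite{AiGoSa} become essential --- the latter ensuring that each Bernstein component of ${}^{\sm} C^{\infty}(X)$ is controlled by finitely many test vectors, so that the extended functional is unambiguously pinned down and continuous once within-block compatibility has been secured.
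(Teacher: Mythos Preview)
Your proposal is essentially the paper's own argument: recast the problem via the identifications $\calD_{\pc}(X)\cong\Hom(Forg,Coinv_H)$ and $\calA(X)^*\cong\Hom(Forg|_{\calM(G)^{\fl}},Coinv_H|_{\calM(G)^{\fl}})$, check that algebraic functionals correspond to algebraic natural transformations, and then invoke the categorical Paley--Wiener proposition (whose hypotheses are supplied by Bernstein's block decomposition and \cite{AiGoSa}) to extend from $\calM(G)^{\fl}$ to $\calM(G)$. Two small remarks: your contravariant functors $V\mapsto\Hom_G(V,{}^{\sm}C^\infty(X))$ and $V\mapsto V^*$ must first be dualized (using finite multiplicity) to the covariant, colimit-preserving $Forg$ and $Coinv_H$ before the machinery of \S\ref{sec categorical} applies, and you need no separate density argument---injectivity of $\iota_X$ (hence density of $\calA(X)$) falls out of the same commutative diagram once the restriction map of functor morphisms is shown to be an isomorphism.
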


\subsubsection{}\label{sssec iota}

Let us unfold the statement of Theorem \ref{thm main theorem}. Consider the linear map $$ \iota_X : \calD_{\pc} (X) \to \calA (X)^*$$ given by sending a distribution $\delta$, which is a linear functional on ${}^{\sm} C^{\infty} (X)$, to its restriction to $\calA(X)$. The theorem says that $\iota_X$ is injective, and characterizes its image. Let us spell out this characterization. Let $Z \in \Aff$ be smooth. Given a function $$\Phi : Z(\bbC) \times X \to \bbC,$$ denote $\Phi^z (x) = \Phi (z,x)$ and $\Phi_x (z) = \Phi (z,x)$. We will first say what it means for $\Phi$ to be an algebraic mapping (this corresponds to $z \mapsto \Phi^z$ being an algebraic mapping from $Z$ to $\calA (X)$ in the sense of \S\ref{sssec alg map for reduced}). The conditions are:

\begin{itemize}
    \item For every $z \in Z(\bbC)$ one has $\Phi^z \in \calA (X)$ and for every $x \in X$ one has $\Phi_x \in \calO_Z$.
	\item There exists a open compact subgroup $K \subset G$ such that for every $z \in Z(\bbC)$, $\Phi^z \in {}^K \calA (X)$.
    \item Given an algebraic differential operator $D$ on $Z$, denote by $$D\Phi : Z(\bbC) \times X \to \bbC$$ the function for which $(D\Phi)_x = D(\Phi_x)$. Then for every algebraic differential operator $D$ on $Z$ and every $z \in Z(\bbC)$, one has $(D\Phi)^z \in \calA (X)$.
\end{itemize}

Then the condition on a functional $\ell \in \calA (X)^*$, Theorem \ref{thm main theorem} guarantees, to be in the image of $\iota_X$ is that for every smooth $Z \in \Aff$ and every algebraic mapping $\Phi : Z(\bbC) \times X \to \bbC$, the following holds:

\begin{itemize}
	\item The function on $Z(\bbC)$ given by $z \mapsto \ell (\Phi^z)$ is algebraic - denote it $f_{\Phi} \in \calO_Z$.
	\item Given an algebraic differential operator $D$ on $Z$, we have $(D f_{\Phi} )(z) = \ell((D\Phi)^z)$.
\end{itemize}

\subsubsection{}

To prove Theorem \ref{thm main theorem}, we will recast it in a more categorical form in order to use structure theory of $\calM (G)$ \`a la Bernstein. In \S\ref{sec categorical} we will present the general categorical framework, and in \S\ref{sec proof of main thm} we will prove Theorem \ref{thm main theorem}.


\section{A categorical Paley-Wiener framework}\label{sec categorical}

In this section we provide a categorical framework into which we will recast Theorem \ref{thm main theorem}. Throughout this section, we fix a $\bbC$-linear abelian category $\calM$ admitting all small colimits. By $\calM^{\fl} \subset \calM$ we denote the full subcategory consisting of objects of finite length. By $\Fun^{l} (\calM , \Vect)$ we denote the category of functors, from $\calM$ to the category of vector spaces $\Vect$, which commute with all small colimits (``$l$" stands for ``left adjoint", alluding to the fact that, under suitable presentability conditions on $\calM$, a functor commutes with all small colimits if and only if it is a left adjoint, i.e. has a right adjoint).

\subsection{Objects with algebra action and their punctual finite length properties}

\subsubsection{}

Given $A \in \Alg$, we denote by $\calM^A$ the abelian category whose objects are pairs $(M,\xi)$ consisting of an object $M \in \calM$ and an algebra morphism $\xi : A \to End(M)$ (morphisms are defined in a straightforward way). For example, $\Vect^A$ is the category of (left) $A$-modules.

\subsubsection{}

We will denote by $M \mapsto \underline{M}$ the forgetful functor $\calM^A \to \calM$.

\subsubsection{}

Notice that a functor $C \in \Fun^{l} (\calM , \Vect)$ automatically upgrades to a functor $C \in \Fun (\calM^A , \Vect^A)$ (which by abuse of notation we will denote by the same letter).

\subsubsection{}

Given a morphism $A \to B$ in $\Alg$, we have an adjunction $$ B \underset{A}{\otimes} - : \ \calM^A \rightleftarrows \calM^B : \ (-)^B_A,$$ where $(-)^B_A$ is the forgetful functor (i.e. $M^B_A$ is $M$ equipped with the algebra morphism $A \to B \to \End (M)$). Given an ideal $I \subset A$, let us also abbreviate $M/IM := (A/I) \underset{A}{\otimes} M$.

\subsubsection{}

Given a morphism $A \to B$ in $\Alg$ and $C \in \Fun^{l} (\calM , \Vect)$, we have naturally $$ C(B \underset{A}{\otimes} -) \cong B \underset{A}{\otimes} C(-).$$

\subsubsection{}

Let $A \in \Alg$ and let $M \in \calM^A$. We will say that $M$ is \textbf{punctually of finite length} if, for every cofinite ideal $I \subset A$, the object $\underline{M/IM} \in \calM$ has finite length. We will say that $M$ is \textbf{regularly punctually of finite length} if it is punctually of finite length and for every morphism $\underline{M} \to V$, where $V$ has finite length, there exists a cofinite ideal $I \subset A$ such that $\underline{M} \to V$ factors via the canonical $\underline{M} \to \underline{M/IM}$.

\subsection{Compact projective objects and their properties}

\subsubsection{}

Given a projective object $P \in \calM$, we have an adjunction $$ - \underset{\End (P)}{\otimes} P \ : \calM (\End(P)^{\op}) \rightleftarrows \calM : \ \Hom (P,-).$$ The left adjoint is fully faithful, and its essential image consists of objects $M \in \calM$ admitting a resolution $$ P^{\oplus I} \to P^{\oplus J} \to M \to 0.$$

\subsubsection{}

A projective object $P \in \calM$ is called \textbf{compact} if $$\Hom (P,-) : \calM \to \Vect$$ commutes with filtered small colimits, or, equivalently, with small direct sums.

\subsubsection{}

We say that there are \textbf{enough} compact projective objects in $\calM$ with some property $\pi$ if there exists a set $\calP$ of compact projective objects in $\calM$ with property $\pi$ such that for every $0 \neq M \in \calM$ there exists an object $P \in \calP$ such that $\Hom (P,M) \neq 0$.

\subsubsection{}

It is not hard to see that if there are enough compact projective objects in $\calM$ with a property $\pi$, then for every compact projective object $P \in \calM$ there exist compact projective objects $P_1 , \ldots , P_k \in \calM$ with property $\pi$ such that $P$ is isomorphic to a direct summand in $P_1 \oplus \ldots \oplus P_k$.

\subsubsection{}

We will say that a compact projective object $P \in \calM$ is \textbf{splitting} if the essential image of the functor $- \underset{\End (P)}{\otimes} P$ above is a direct summand of $\calM$.

\subsubsection{}

Let us say that a compact projective object $P \in \calM$ is \textbf{familial} if the center of $\End (P)$ is a finitely generated $\bbC$-algebra, and $\End (P)$ is finitely generated as a module over its center.

\subsubsection{} The following two claims will serve us in our case of interest, to check that properties related to compact projective objects are satisfied.

\begin{claim}\label{clm criterion rpfl}
    Let $P \in \calM$ be a splitting familial compact projective object. Denote by $A$ the center of $\End (P)$. Then considering $P$ as an object of $\calM^{A}$, it is regularly punctually of finite length.
\end{claim}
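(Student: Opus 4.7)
The plan is to leverage the colimit-preserving, fully faithful embedding
\[
\iota := - \underset{\End(P)^{\op}}{\otimes} P \; : \; \calM(\End(P)^{\op}) \hookrightarrow \calM,
\]
under which $P$ itself corresponds to $\End(P)$ viewed as the rank-one free right $\End(P)$-module. Since $P$ is splitting, the essential image $\calN$ of $\iota$ is a direct summand $\calM = \calN \times \calN'$, so $\iota$ is an equivalence onto $\calN$ with quasi-inverse $\Hom(P,-)$. The familial hypothesis guarantees that $\End(P)$ is finitely generated as a module over its center $A$, and that $A$ is a finitely generated $\bbC$-algebra; the $A$-action on $P$ throughout is the canonical one coming from $A \subset \End(P)$.

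For punctual finite length, let $I \subset A$ be any cofinite ideal. Because $\iota$ preserves colimits, it intertwines the formation of $(A/I) \underset{A}{\otimes} -$, so
\[
\underline{P/IP} \; = \; \iota\bigl((A/I) \underset{A}{\otimes} \End(P)\bigr) \; = \; \iota\bigl(\End(P)/I\End(P)\bigr).
\]
Now $A/I$ is finite-dimensional over $\bbC$ and $\End(P)$ is a finite $A$-module, so $\End(P)/I\End(P)$ is a finite-dimensional $\bbC$-algebra, hence of finite length as a right module over itself. Because $\iota$ is an equivalence onto a direct summand of $\calM$, it preserves finite length, and therefore $\underline{P/IP}$ is of finite length in $\calM$.

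For regularity, let $\phi : \underline{P} \to V$ be a morphism with $V$ of finite length in $\calM$. Decomposing $V = V_\calN \oplus V_{\calN'}$ and using $\Hom_\calM(\calN, \calN') = 0$, the morphism $\phi$ automatically factors through $V_\calN$, which is again of finite length. Under $\iota^{-1} = \Hom(P,-)$, the object $V_\calN$ corresponds to a right $\End(P)$-module $W$ of finite length, and $\Hom_\calM(\underline{P}, V_\calN) = W$ as $A$-modules. The key algebraic input is a form of the noncommutative Nullstellensatz: since $\End(P)$ is a finitely generated $\bbC$-algebra that is module-finite over its center, every simple right $\End(P)$-module has endomorphism ring $\bbC$ and is therefore finite-dimensional over $\bbC$; consequently $W$ itself is finite-dimensional, and $I := \textnormal{Ann}_A(W) \subset A$ is a cofinite ideal. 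Expressing $(A/I) \underset{A}{\otimes} P$ in $\calM$ as the cokernel of the natural map $\bigoplus_{a \in I} \underline{P} \to \underline{P}$ with components $a\cdot$, one sees that a morphism $\underline{P} \to V_\calN$ factors through $\underline{P} \to \underline{P/IP}$ exactly when $I$ annihilates it in $\Hom_\calM(\underline{P}, V_\calN)$. This holds by our choice of $I$, so $\phi$ factors through $\underline{P/IP}$, completing the argument.

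The main obstacle I anticipate is organizational rather than substantive: verifying that $\iota$ transports all the relevant pieces of structure correctly — the $A$-action on Hom-sets, the quotient $P/IP$, and the notion of finite length — between $\calM(\End(P)^{\op})$ and $\calN \subset \calM$, and invoking the appropriate form of the noncommutative Nullstellensatz for finitely generated $\bbC$-algebras that are module-finite over their center. Once these are in hand, both parts of the claim reduce to short manipulations.
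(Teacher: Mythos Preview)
Your proof is correct and follows essentially the same route as the paper: reduce via the splitting equivalence to $\calM(R)$ with $R=\End(P)^{\op}$, use finiteness of $R$ over $A$ for punctual finite length, and for regularity use that simple $R$-modules are finite-dimensional over $\bbC$ so that the relevant annihilator in $A$ is cofinite. The one difference is that where you invoke a noncommutative Nullstellensatz as a black box, the paper supplies a direct two-line Nakayama argument (for a maximal left ideal $\frakm\subset R$, any ideal $J\supsetneq\frakm\cap A$ satisfies $J\cdot(R/\frakm)=R/\frakm$, forcing $1\in J$), so your anticipated obstacle dissolves without external machinery; note also that your phrase ``has endomorphism ring $\bbC$ and is therefore finite-dimensional'' tacitly uses that the simple module is a quotient of $R$, hence finitely generated over $A$, which then acts through $\bbC$.
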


\begin{proof}
	Given in \S\ref{ssec proof of clm criterion rpfl}.
\end{proof}

\begin{claim}\label{clm criterion finite generation}
    Let $C \in \Fun^{l} (\calM , \Vect)$. Suppose that there are enough compact projective objects in $\calM$ which are splitting and familial, and that there are enough compact projective objects $P$ in $\calM$ for which $C(P)$ is a finitely generated $\End (P)$-module. Then $C(P)$ is a finitely generated $\End (P)$-module for every compact projective object $P \in \calM$.
\end{claim}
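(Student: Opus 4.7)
The plan is to show that the class $\calP_{fg}$ of compact projective objects $P \in \calM$ with $C(P)$ finitely generated over $\End(P)$ contains \emph{every} compact projective in $\calM$. By hypothesis (ii), $\calP_{fg}$ is ``enough'', so by the direct-summand-of-direct-sum principle stated just before the claim, it suffices to show that $\calP_{fg}$ is closed under finite direct sums and under direct summands. Closure under direct sums is immediate: for $P = \bigoplus_i P_i$ with $P_i \in \calP_{fg}$, generators of each $C(P_i)$ over $\End(P_i)$ placed in the respective coordinate form a generating set of $C(P) = \bigoplus_i C(P_i)$ over the block-diagonal $\prod_i \End(P_i) \subseteq \End(P)$, hence over $\End(P)$.

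Closure under direct summands is the core step, and this is where hypothesis (i) enters. As a preliminary I would show that every splitting-familial compact projective $P^{sf}$ lies in $\calP_{fg}$: by hypothesis (ii) together with direct-sum closure, $P^{sf}$ is a direct summand of some $Q \in \calP_{fg}$; the splitting of $P^{sf}$ produces $\End(Q) = \End(Q_{P^{sf}}) \times \End(Q^{\perp}_{P^{sf}})$ and hence $C(Q_{P^{sf}})$ is finitely generated over $\End(Q_{P^{sf}})$; and under the equivalence $\calM_{P^{sf}} \cong \calM(\End(P^{sf})^{\op})$ the summand $Q_{P^{sf}}$ corresponds to a finitely generated projective module containing the free rank-one module as direct summand, i.e.\ to a progenerator, so Morita equivalence transfers finite generation to $C(P^{sf})$ over $\End(P^{sf})$.

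Now let $P$ be an arbitrary compact projective. By hypothesis (i), $P$ is a direct summand of some $Q' = \bigoplus_j P_j^{sf}$ with each $P_j^{sf}$ splitting-familial. After consolidating the $P_j^{sf}$'s so that they represent pairwise orthogonal blocks of $\calM$ (combining same-block representatives into larger splitting-familial direct sums within each block), the cross Homs $\Hom(P_i^{sf}, P_j^{sf})$ for $i \neq j$ vanish, so $\End(Q') = \prod_j \End(P_j^{sf})$ is a finite product of rings each of which is Noetherian by familiality (a finite module over a finitely generated commutative $\bbC$-algebra); hence $\End(Q')$ itself is Noetherian. By the preliminary, each $P_j^{sf} \in \calP_{fg}$, so $Q' \in \calP_{fg}$ by direct-sum closure, i.e.\ $C(Q')$ is finitely generated over $\End(Q')$. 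The classical corner lemma --- if $\tilde R$ is Noetherian, $\tilde M$ is finitely generated over $\tilde R$, and $e \in \tilde R$ is an idempotent, then $e \tilde M$ is finitely generated over $e \tilde R e$ --- applied to $\tilde R = \End(Q')$, $\tilde M = C(Q')$, and $e$ the idempotent cutting out $P$, yields $C(P) = e C(Q')$ finitely generated over $\End(P) = e \End(Q') e$, completing closure under direct summands.

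The main obstacle is the consolidation of splitting-familial summands into pairwise orthogonal blocks so that $\End(Q')$ becomes a finite product of Noetherian familial rings. If this orthogonality is not directly available in the abstract setting, one can instead work inside each block $\calM_{P_j^{sf}} \cong \calM(\End(P_j^{sf})^{\op})$ separately, applying the corner lemma within the single Noetherian algebra $\End(P_j^{sf})$ and then reassembling across blocks using direct-sum closure. The remaining ingredients --- Morita for the splitting-familial preliminary and the corner lemma --- are classical.
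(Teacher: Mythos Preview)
Your overall strategy is sound and can be completed, but it differs substantially from the paper's route, and the ``consolidation'' step you flag as the main obstacle is a genuine gap requiring more than you give it.

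\textbf{The gap.} Blocks generated by distinct splitting projectives need not be equal or orthogonal: they can overlap partially. So ``combining same-block representatives'' is not well-defined as stated, and your fallback (``work inside each block separately and reassemble by direct-sum closure'') fails for the same reason --- when the $\calM_{P_j^{sf}}$ overlap, $P$ does not decompose as $\bigoplus_j P_{(j)}$. The fix is to pass to the common refinement: the finitely many direct-summand decompositions $\calM = \calM_j \oplus \calM_j^{\perp}$ correspond to commuting central idempotents of $\Id_{\calM}$, so they generate a finite Boolean algebra of orthogonal blocks $\calN_k$. One then checks that each refined component $(P_j^{sf})_k$ is again splitting and familial (familiality of summands uses that a corner $e R e$ of a familial ring $R$ is module-finite over the image $eZ(R)$ of the center, together with Artin--Tate for the new center). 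Within each $\calN_k \simeq \calM(R_k)$ the sum $\bigoplus_j (P_j^{sf})_k$ is a finitely generated projective over the Noetherian $R_k$, so has Noetherian endomorphism ring; thus $\End(Q')$ is a finite product of Noetherian rings and your corner-lemma argument goes through.

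\textbf{Comparison with the paper.} The paper avoids the corner lemma, the Morita preliminary, and the block-refinement issue altogether. It first reduces, by splitting generators of $C(Q)$ along $Q = P \oplus P'$, to showing that $\Hom(P',P)$ is finitely generated over $\End(P)$ for any two compact projectives. It then uses hypothesis (i) to reduce $P'$ to a single splitting familial object, passes to that one block $\calM(R)$ with $R = \End(P')^{\op}$, and observes that the image $S$ of $P$ there is finitely generated over $R$, hence over the center $A = Z(R)$, hence over $\End_R(S)$ because $A$ acts through $A \to \End_R(S)$. This is shorter and invokes familiality only via the single observation ``the center acts through $\End$'', rather than via Noetherianness plus the corner lemma. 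Your approach, once the refinement is filled in, has the merit of isolating the structural statement that $\calP_{fg}$ is closed under summands; the paper's argument is more direct but less modular.
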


\begin{proof}
	Given in \S\ref{ssec proof of clm criterion finite generation}.
\end{proof}

\subsection{Algebraic morphsims and the categorical Paley-Wiener proposition}

\subsubsection{}

By a \textbf{pair} we will mean a pair $(Z,M)$ consisting of $Z \in \Aff$ and $M \in \calM^{\calO_Z}$ which is punctually of finite length. A family $\pi$ of pairs will be said to be \textbf{stable under finite morphisms} if given a pair $(Z,M)$ in $\pi$ and a finite morphism $Z \to W$, the resulting pair $(W , M^{\calO_Z}_{\calO_W})$ also lies in $\pi$. A family $\pi$ of pairs will be said to be \textbf{stable under base change} if given a pair $(Z,M)$ in $\pi$ and a morphism $W \to Z$, the resulting pair $(W , \calO_W \underset{\calO_Z}{\otimes} M)$ also lies in $\pi$.

\subsubsection{}\label{sssec algebraic morphism}

Let $D,C \in \Fun^{l} (\calM , \Vect)$ and let $t : D|_{\calM^{\fl}} \to C|_{\calM^{\fl}}$. Let $(Z,M)$ be a pair.

\medskip

We say that the data $(t,Z,M,m,\alpha)$, where $m \in M$ and $\alpha \in \Hom_{\calO_Z} (C(M) , \calO_Z)$, is \textbf{algebraic} if there exists $a \in \calO_Z$ such that for every zero-dimensional closed subscheme $W \subset Z$, the image of $d$ under $$ D(M) \to D(\calO_W \underset{\calO_Z}{\otimes} M) \xrightarrow{t_{\calO_W \underset{\calO_Z}{\otimes} M}} C(\calO_W \underset{\calO_Z}{\otimes} M) \cong \calO_W \underset{\calO_Z}{\otimes} C(M) \xrightarrow{\alpha} \calO_W$$ is equal to the image of $a$ under the projection $\calO_Z \to \calO_W$.

\medskip

Clearly, if $Z$ is zero-dimensional then every data $(t,Z,M,m,\alpha)$ is algebraic.

\medskip

Assuming for simplicity that $Z$ is irreducible, we say that the data $(t,Z,M,d,\alpha)$ is \textbf{generically algebraic} if there exists a dense distinguished open subset $U \to Z$ such that the obviously resulting data $(t,U,\calO_U \underset{\calO_Z}{\otimes} M , [d] , [\alpha])$ is algebraic.

\medskip

We say that the data $(t,Z,M)$ is \textbf{algebraic} (resp. \textbf{generically algebraic}) if $(t,Z,M,d,\alpha)$ is algebraic (resp. generically algebraic) for all $d \in D(M)$ and $\alpha \in \Hom_{\calO_Z} (C(M) , \calO_Z)$.

\medskip

We say that $t$ is \textbf{algebraic} if the data $(t,Z,M)$ is algebraic for all pairs $(Z , M)$.

\medskip

Given a family $\pi$ of pairs closed under finite morphisms and closed under base change, let us say that $t$ is \textbf{$\pi$-$\bbA$-almost algebraic} if for any pair $(\bbA^n , M)$ in $\pi$ the data $(t , \bbA^n , M)$ is algebraic if $n = 1$ and generically algebraic if $n > 1$.

\subsubsection{}

We denote by $$ \Hom^{\alg} (D|_{\calM^{\fl}} , C|_{\calM^{\fl}}) \subset \Hom^{\pi\textnormal{-}\bbA\textnormal{-}\textnormal{a }\alg} (D|_{\calM^{\fl}} , C|_{\calM^{\fl}}) \subset \Hom (D|_{\calM^{\fl}} , C|_{\calM^{\fl}})$$ the subspaces of algebraic and $\pi$-$\bbA$-almost algebraic elements. Restriction defines a map $$ \Hom (D,C) \to \Hom^{\alg} (D|_{\calM^{\fl}} , C|_{\calM^{\fl}}).$$

\subsubsection{}

The main proposition of the current section is the following.

\begin{proposition}[categorical Paley-Wiener]\label{prop abstract setting}
	Let $C \in \Fun^{l} (\calM , \Vect)$ and let $\pi$ be a family of pairs stable under finite morphisms and stable under base change. Suppose that there are enough compact projective objects in $\calM$ of the form $\underline{M}$ where $(Z,M)$ is a pair in $\pi$ such that, in addition, $M$ is regularly punctually of finite length and  $C(\underline{M})$ is a finitely generated $\calO_Z$-module. Then for any $D \in \Fun^{l}(\calM , \Vect)$ the restriction map $$ \Hom (D,C) \to \Hom^{\pi\textnormal{-}\bbA\textnormal{-}\textnormal{a }\alg} (D|_{\calM^{\fl}} , C|_{\calM^{\fl}})$$ is an isomorphism. In particular, we have an equality $$ \Hom^{\alg} (D|_{\calM^{\fl}} , C|_{\calM^{\fl}}) = \Hom^{\pi\textnormal{-}\bbA\textnormal{-}\textnormal{a }\alg} (D|_{\calM^{\fl}} , C|_{\calM^{\fl}})$$ and the restriction map $$ \Hom (D,C) \to \Hom^{\alg} (D|_{\calM^{\fl}} , C|_{\calM^{\fl}})$$ is an isomorphism.
\end{proposition}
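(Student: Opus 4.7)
The plan is to invert the restriction map: given $t : D|_{\calM^{\fl}} \to C|_{\calM^{\fl}}$ that is $\pi$-$\bbA$-almost algebraic, I construct a natural transformation $\tilde t : D \to C$ extending $t$ and verify injectivity of the restriction map. Since $D$ and $C$ preserve all small colimits and there are enough compact projective generators of the form $\underline M$ with $(Z, M) \in \pi$ satisfying the stated hypotheses, every compact projective in $\calM$ is a direct summand of a finite direct sum of such $\underline M$'s, and every object of $\calM$ is a cokernel of a map of direct sums of compact projectives. The whole problem therefore reduces to constructing $\tilde t_{\underline M}$ on these privileged generators and checking naturality between them; the passage to direct summands, direct sums, and cokernels is then automatic from the colimit-preservation of $D$ and $C$.

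The first main step, and the one I expect to be the main obstacle, is promoting $\pi$-$\bbA$-almost algebraicity to full algebraicity for every pair $(Z, M) \in \pi$. Given such a pair, Noether normalization yields a finite surjection $Z \to \bbA^n$, and closure of $\pi$ under finite morphisms produces a pair $(\bbA^n, M')$ in $\pi$, for which the hypothesis directly supplies (generically) algebraic data. Closure under base change then lets me restrict to open subsets and to closed strata, pull back algebraic data, and induct on $\dim Z$ (reducing the non-generic locus on $\bbA^n$ to lower dimension). The key technical device for promoting generic algebraic data on $\bbA^n$ to honest algebraic data, and for transporting it back from $\bbA^n$ to $Z$, is the reconstruction theorem of \S\ref{sec algebraic lemma} on recovering global sections of a coherent sheaf on an affine variety from compatible sections on enough closed subvarieties. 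Finite generation of $C(\underline M)$ as an $\calO_Z$-module is what makes the relevant sheaf coherent and hence amenable to that reconstruction.

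The second main step is to use full algebraicity to define $\tilde t_{\underline M} : D(\underline M) \to C(\underline M)$. For any $d \in D(\underline M)$ and any cofinite ideal $I \subset \calO_Z$, the module $M/IM$ has finite length (punctual finite length), so $t_{M/IM}$ applies and produces an element $c_I \in C(\underline{M/IM}) \cong C(\underline M) / I \cdot C(\underline M)$; the family $\{c_I\}_I$ is compatible. Full algebraicity, applied to arbitrary $\alpha \in \Hom_{\calO_Z}(C(\underline M), \calO_Z)$, supplies polynomials $a_{\alpha, d} \in \calO_Z$ reducing to $\alpha(c_I)$ modulo each $I$. Combining this with finite generation of $C(\underline M)$ over $\calO_Z$, a further application of the reconstruction results of \S\ref{sec algebraic lemma} lifts $\{c_I\}$ uniquely to an element $\tilde t_{\underline M}(d) \in C(\underline M)$. $\calO_Z$-linearity and additivity in $d$ are immediate from uniqueness.

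Naturality of $\tilde t_{\underline M}$ across morphisms between privileged generators follows from naturality of $t$ on $\calM^{\fl}$ together with regular punctual finite length (Claim~\ref{clm criterion rpfl}): any morphism from $\underline M$ to a finite-length object factors through $\underline{M/IM}$ for some cofinite $I$, which reduces naturality questions to $\calM^{\fl}$ where $t$ itself handles them. Injectivity of the restriction map reduces to the assertion that $\tilde t_{\underline M}$ is determined by its reductions modulo cofinite ideals, which is again the injectivity half of the reconstruction lemma applied to the coherent sheaf on $Z$ associated to $C(\underline M)$. Finally, that the constructed $\tilde t$ restricted to $\calM^{\fl}$ recovers $t$ holds by construction on objects of the form $\underline{M/IM}$, and extends to arbitrary finite-length objects via naturality and their presentations in terms of the privileged generators. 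The real difficulty is concentrated in Step~1, where the interplay between Noether normalization, the closure properties of $\pi$, and the coherent-sheaf reconstruction lemmas must be carefully orchestrated.
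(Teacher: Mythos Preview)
Your overall strategy and the ingredients you identify—Noether normalization to reduce to affine space, induction on dimension, closure of $\pi$ under finite morphisms and base change, and the reconstruction results of \S\ref{sec algebraic lemma}—are the same as in the paper. However, your sequencing ``first Step~1 (full algebraicity), then Step~2 (construct $\tilde t$)'' introduces a genuine gap.

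In Step~2 you produce the system $\{c_I\}$ only for \emph{cofinite} ideals $I\subset\calO_Z$ (because $t$ is defined only on $\calM^{\fl}$), and then invoke the reconstruction results of \S\ref{sec algebraic lemma}. But Propositions~\ref{prop app main} and~\ref{prop app main 2} take as input an element of $S(C(\underline M))=\lim_{W\in\calI_Z}C(\underline M)|_W$, i.e.\ a compatible system over \emph{all} $W\in\calI_Z$ (equivalently, all nonzero ideals of $\calO_Z$, after reducing to an integral base), not just the $0$-dimensional ones. The elements $a_{\alpha,d}\in\calO_Z$ from Step~1 do not supply the missing $c_I$ for positive-dimensional $I\in\calI_Z$: they only give an element of the double dual $R(C(\underline M))$, and when $C(\underline M)$ has torsion or is otherwise not locally free, this does not determine an element of $C(\underline M)$, nor does it pin down the reductions $c_I$ for non-cofinite $I$. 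Even the reflexive case of Proposition~\ref{prop app main} uses Lemma~\ref{lem torsion coherent}, which again requires the full system over $\calI_Z$.

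The fix is to run Step~2 itself by induction on $\dim Z$: the inductive hypothesis furnishes $\tilde t_{\underline{M/IM}}$ for every $I\in\calI_Z$, hence a compatible family $(c_I)_{I\in\calI_Z}\in S(C(\underline M))$, and then $\pi$-$\bbA$-almost algebraicity gives (generic) coherence of each $\alpha$-image so that Propositions~\ref{prop app main}/\ref{prop app main 2} apply. This is exactly the paper's argument, and once you do it this way your Step~1 becomes a by-product (it is the ``in particular'' clause of the proposition) rather than a separate preliminary step.
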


\begin{proof}
	Given in \S\ref{ssec proof of prop abstract setting}.
\end{proof}

In view of Claim \ref{clm criterion rpfl} and Claim \ref{clm criterion finite generation} we can also formulate the following corollary:

\begin{corollary}\label{cor prop abstract setting}
	Let $C \in \Fun^{l} (\calM , \Vect)$ and let $\pi$ be a family of pairs stable under finite morphisms and stable under base change. Suppose that the following conditions hold:
		\begin{enumerate}
		    \item There are enough compact projective objects $P$ in $\calM$ which are splitting and familial, and such that the pair $(Spec(Z(\End (P))),P)$ lies in $\pi$.
		    \item There are enough compact projective objects $P$ in $\calM$ for which $C(P)$ is a finitely generated $\End(P)$-module.
		\end{enumerate}
		
		Then for any $D \in \Fun^{l}(\calM , \Vect)$ the restriction map $$ \Hom (D,C) \to \Hom^{\pi\textnormal{-}\bbA\textnormal{-}\textnormal{a }\alg} (D|_{\calM^{\fl}} , C|_{\calM^{\fl}})$$ is an isomorphism. In particular, we have an equality $$ \Hom^{\alg} (D|_{\calM^{\fl}} , C|_{\calM^{\fl}}) = \Hom^{\pi\textnormal{-}\bbA\textnormal{-}\textnormal{a }\alg} (D|_{\calM^{\fl}} , C|_{\calM^{\fl}})$$ and the restriction map $$ \Hom (D,C) \to \Hom^{\alg} (D|_{\calM^{\fl}} , C|_{\calM^{\fl}})$$ is an isomorphism.
\end{corollary}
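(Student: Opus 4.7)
The plan is to reduce Corollary \ref{cor prop abstract setting} directly to Proposition \ref{prop abstract setting}, using Claims \ref{clm criterion rpfl} and \ref{clm criterion finite generation} to manufacture the class of pairs that the proposition's hypothesis requires.

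First I would fix, using hypothesis (1), a set $\calP$ of compact projective objects of $\calM$ which is sufficient in the sense that $\Hom(P,M) \neq 0$ for some $P \in \calP$ whenever $0 \neq M \in \calM$, and such that each $P \in \calP$ is splitting and familial with $(Spec(Z(\End(P))), P) \in \pi$. For each such $P$, set $A_P := Z(\End(P))$, so that by familiality $A_P \in \Alg$ and $Z_P := Spec(A_P) \in \Aff$, and view $P$ as an object of $\calM^{A_P}$ via the inclusion $A_P \hookrightarrow \End(P)$.

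Next I would invoke Claim \ref{clm criterion rpfl} to conclude that for each $P \in \calP$, the pair $(Z_P, P)$ has $P$ regularly punctually of finite length. Then I would apply Claim \ref{clm criterion finite generation}: its two hypotheses are precisely the two assumptions of the corollary, and its conclusion gives that $C(P)$ is a finitely generated $\End(P)$-module for \emph{every} compact projective $P \in \calM$, in particular for each $P \in \calP$. Combining this with the second half of familiality — namely that $\End(P)$ is finitely generated as a module over its center $A_P$ — immediately yields that $C(\underline{P}) = C(P)$ is finitely generated as an $A_P = \calO_{Z_P}$-module.

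Hence the family of pairs $\{(Z_P, P)\}_{P \in \calP}$ consists of pairs in $\pi$ each of whose second component is regularly punctually of finite length and each of whose $C$-value is finitely generated over $\calO_{Z_P}$, with the underlying projectives $\underline{P}$ forming a sufficient class in $\calM$. This is exactly the hypothesis of Proposition \ref{prop abstract setting}, which therefore applies and delivers all three conclusions of the corollary. I do not expect any real obstacle: once the two claims are in hand, the corollary is a packaging exercise, the only substantive verification being the observation that familiality promotes finite generation over $\End(P)$ to finite generation over the central subalgebra $\calO_{Z_P} = A_P$.
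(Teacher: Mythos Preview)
Your proposal is correct and matches the paper's approach exactly: the paper simply states that the corollary follows from Proposition~\ref{prop abstract setting} ``in view of Claim~\ref{clm criterion rpfl} and Claim~\ref{clm criterion finite generation}'', and your argument spells out precisely this reduction, including the observation that familiality promotes finite generation over $\End(P)$ to finite generation over its center.
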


Of course, we can further simplify by taking $\pi$ to be the family of all pairs.


\section{Proof of the main theorem (Theorem \ref{thm main theorem})}\label{sec proof of main thm}

In this section we prove Theorem \ref{thm main theorem}. Throughout this section, we fix an $F$-spherical homogeneous right $G$-space $X$, having the finite multiplicity property. We fix $x_0 \in X$ and denote $H := Stab_G (x_0)$, so that $X \cong H \backslash G$.

\subsection{Function-to-functor isomorphisms}

\subsubsection{}

Let $Forg \in \Fun^{l} (\calM (G) , \Vect)$ be the forgetful functor, and let $Coinv_H \in \Fun^{l} (\calM (G) , \Vect)$ be the functor of $H$-coinvariants: $$ Coinv_H (M) := {}_H M := M / \langle hm - m\rangle_{h \in H , m\in M}.$$

\subsubsection{}

We will now describe an isomorphism of right $G$-modules \begin{equation}\label{eq functor hom description} \calD_{\pc} (H \backslash G) \cong \Hom (Forg , Coinv_H)\end{equation} (for which we don't need $H \backslash G$ to be neither $F$-spherical, nor to have the finite multiplicity property). The right $G$-module structure on the space of functors is by the left $G$-module structure on the forgetful functor.

\medskip

For a right $G$-module $V$, let us denote by $V^{\wedge}$ the limit of spaces $V^K$ for $K \subset G$ an open compact subgroup, with respect to the maps $V^K \to V^L$ given by averaging with respect to $L$ on the right, whenever $K \subset L$. Then for any $G$-module $V$ we have $$ \Hom_{\calH_G^{\op}} (\calH_G , V) \cong V^{\wedge}$$ given by sending the morphism $S$ to the element which is equal to $S(e_K)$ at the $K$-component, where $e_K$ is the idempotent corresponding to $K$. On the other hand, for any $C \in \Fun^{l} (\calM (G) , \Vect)$ it is standard to construct an isomorphism $$ \Hom (Forg , C) \cong \Hom_{\calH_G^{\op}} (\calH_G , C(\calH_G))$$ (here $C(\calH_G)$ has a right $G$-module structure since $\calH_G$ has one). Combining the two, we obtain an isomorphism $$ \Hom (Forg , C) \cong C(\calH_G)^{\wedge}.$$ In our special case $C = Coinv_H$ we obtain $$ \Hom (Forg, Coinv_H) \cong ({}_H (\calH_G))^{\wedge}.$$ The standard pushforward map (given by integrating along $H$) $$ \calH_G = \calD_c (G)^{\sm} \to \calD_c (H \backslash G)^{\sm}$$ sets an isomorphism $$ {}_H (\calH_G) \cong \calD_c (H \backslash G)^{\sm}.$$ Therefore we obtain $$ ({}_H (\calH_G))^{\wedge} \cong (\calD_c (H \backslash G)^{\sm})^{\wedge} \cong \calD_{\pc} (H \backslash G).$$ Collecting these, we obtain isomorphism (\ref{eq functor hom description}).

\subsubsection{}

Next we describe an isomorphism of right $G$-modules \begin{equation}\label{eq isom fnctnl fnctr} \calA(H \backslash G)^* \cong \Hom (Forg|_{\calM (G)^{\fl}} , Coinv_H|_{\calM (G)^{\fl}}),\end{equation}

\medskip

We will describe the construction of isomorphism (\ref{eq isom fnctnl fnctr}) in both directions; To verify all details is then routine.

\medskip

Let $\ell \in \calA(H \backslash G)^*$ and $V \in \calM (G)^{\fl}$. Since $H\backslash G$ has the finite multiplicity property, ${}_H V \cong (({}_H V)^*)^* \cong (V^{*,H})^*$. Therefore, to define the desired map $V \to {}_H V$ is the same as to define a pairing $$ V \otimes V^{*,H}\to \bbC.$$ Thus, given $v \in V$ and $\alpha \in V^{*,H}$, we would like to obtain a number. We get this number by applying $\ell$ to the function in $\calA (H \backslash G)$ given by $$Hg \mapsto \alpha (g v).$$

\medskip

Conversely, let us be given a morphism of functors $$ t : Forg|_{\calM(G)^{\fl}} \to Coinv_H|_{\calM(G)^{\fl}}.$$ To construct a corresponding $\ell$, given $\phi \in \calA(H \backslash G)$ we want to obtain a number. Let $V \subset \calA (H \backslash G)$ be the $G$-submodule generated by $\phi$. By definition, it has finite length. Therefore we have a linear map $t_V : V \to {}_H V$. Since $H \backslash G$ has the finite multiplicity property, we have ${}_H V \cong (({}_H V)^*)^* \cong (V^{*,H})^*$. Therefore, $t_V$ defines a pairing $$ V \otimes V^{*,H} \to \bbC.$$ As we have the element $\phi \in V$ and the element in $V^{*,H}$ given by the embedding $V \to \calA(H \backslash G)$ composed with the evaluation at the origin $H$ of $H \backslash G$, we obtain a number, as desired.

\subsection{The categorical Paley-Wiener theorem put in action}

\subsubsection{}

We would like to use Corollary \ref{cor prop abstract setting} in our setting. So we need to check the conditions:

\begin{claim}\label{clm M(G) and Coinv_H are suitable}\
	\begin{enumerate}
		\item There are enough compact projective objects $P$ in $\calM (G)$ which are splitting and familial and such that for every open compact subgroup $K \subset G$, ${}^K P$ is a finitely generated module over the center of $\End (P)$.
		\item There are enough compact projective objects $P$ in $\calM (G)$ for which $Coinv_H (P)$ is a finitely generated $\End (P)$-module.
	\end{enumerate}
\end{claim}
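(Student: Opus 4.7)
The plan is to take, as the desired family of compact projective objects for both (1) and (2), the Bernstein projective generators of the blocks of $\calM(G)$. Recall Bernstein's decomposition $\calM(G) = \prod_{\Omega} \calM(G)_{\Omega}$ indexed by Bernstein components $\Omega$, together with projective generators $\Pi_{\Omega}$ of each block. As every nonzero smooth $G$-module has a nonzero component in some block, the collection $\{\Pi_{\Omega}\}_{\Omega}$ constitutes ``enough'' compact projectives in the sense of the paper, so it will suffice to verify, for each fixed $\Omega$, that $\Pi_{\Omega}$ has the properties demanded by (1) and (2).

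For (1), I would first invoke the two key structural facts from Bernstein's theory of the center: the center $A_{\Omega} := Z(\End(\Pi_{\Omega}))$ is a finitely generated $\bbC$-algebra --- namely the coordinate ring of the variety of unramified twists of a cuspidal datum, quotiented by the action of a finite group --- and $\End(\Pi_{\Omega})$ is a finite module over $A_{\Omega}$; these together are precisely familiality. Compactness and projectivity of $\Pi_{\Omega}$ are built into the construction, and the splitting property is immediate from the block decomposition, since the essential image of $- \underset{\End(\Pi_{\Omega})^{\op}}{\otimes} \Pi_{\Omega}$ is precisely the direct summand $\calM(G)_{\Omega}$ of $\calM(G)$. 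For the finite generation of ${}^K \Pi_{\Omega}$ over $A_{\Omega}$: the compactly induced module $P_K := c\text{-ind}_K^G \bbC$ is a compact projective object of $\calM(G)$, whence so is its projection to the block $\calM(G)_{\Omega}$, which must therefore be a direct summand of $\Pi_{\Omega}^{\oplus n}$ for some $n$; applying $\Hom_G(-, \Pi_{\Omega})$ to this splitting shows that ${}^K \Pi_{\Omega}$ is a direct summand of $\End(\Pi_{\Omega})^{\oplus n}$ as a right $\End(\Pi_{\Omega})$-module, hence finitely generated over $\End(\Pi_{\Omega})$, and hence over $A_{\Omega}$ by familiality.

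For (2), the remaining content is that $Coinv_H(\Pi_{\Omega}) = {}_H \Pi_{\Omega}$ is finitely generated as an $\End(\Pi_{\Omega})$-module. This is precisely the ``local finite-generation'' statement of \cite[Theorem I]{AiGoSa}, highlighted in the introduction as the main external input of the article; it is here that the $F$-sphericity of $X = H \backslash G$ enters essentially (the finite multiplicity property, by contrast, is not needed for this claim but was used earlier to recast the Paley-Wiener question into the present functorial language). The main obstacle of the claim is thus entirely absorbed into this cited theorem: (1) amounts to routine book-keeping within Bernstein's theory of the center, whereas (2) is the deep input, available only through the work of Aizenbud--Gourevitch--Sakellaridis on finite generation of $H$-coinvariants for spherical subgroups.
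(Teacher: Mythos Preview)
Your treatment of (1) is correct and coincides with the paper's: the paper also takes the Bernstein projective generators $P_{L,E} = pind^L_G(E \otimes \calO_{Z_L})$ (your $\Pi_{\Omega}$), citing Bernstein's theory for the splitting, familial and compact-projective properties. Your argument for the finite generation of ${}^K\Pi_{\Omega}$ over $A_{\Omega}$, via realizing the $\Omega$-component of $c\text{-}ind_K^G\,\bbC$ as a summand of $\Pi_{\Omega}^{\oplus n}$, is a clean self-contained justification of a point the paper merely asserts.

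For (2), however, the paper does \emph{not} use the Bernstein generators; it switches to the family $\calH_G e_K$ (equivalently $c\text{-}ind_K^G\,\bbC$) as $K$ ranges over open compact subgroups. The point is that \cite[Theorem I]{AiGoSa} is stated precisely in this form: $Coinv_H(\calH_G e_K) \cong \calD_c(H\backslash G)^K$ is finitely generated over the Hecke algebra $\End(\calH_G e_K)^{\op} = e_K\calH_G e_K$. Your assertion that the same theorem gives finite generation of ${}_H\Pi_{\Omega}$ over $\End(\Pi_{\Omega})$ is not what the cited result literally says, so as written there is a gap. It is fixable --- either by simply switching to the $\calH_G e_K$ family for (2), since the claim never required the same family for both parts, or by invoking Claim~\ref{clm criterion finite generation} of the paper, which propagates the finite-generation property from one sufficient family of compact projectives to all compact projectives --- but you should make one of these moves explicit rather than attribute the $\Pi_{\Omega}$-statement directly to \cite{AiGoSa}.
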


\begin{proof}
	Given in \S\ref{ssec proof of clm M(G) and Coinv_H are suitable}.
\end{proof}

Therefore, applying Corollary \ref{cor prop abstract setting}, we obtain:

\begin{corollary}\label{cor cat pal wie for Coinv}
	Let $\pi$ be the property of pairs $(Z , M)$ (where $Z \in \Aff$ and $M \in \calM (G)^{\calO_Z}$ is punctually of finite length) which is that, for every open compact subgroup $K \subset G$, ${}^K M$ is finitely generated over $\calO_Z$. Then, for every $D \in \Fun^l (\calM (G) , \Vect)$, the restriction map $$ \Hom (D,Coinv_H) \xrightarrow{} \Hom^{\pi\textnormal{-}\bbA\textnormal{-a }\alg} (D|_{\calM (G)^{\fl}} , Coinv_H|_{\calM(G)^{\fl}})$$ is an isomorphism and the inclusion $$ \Hom^{\alg} (D|_{\calM (G)^{\fl}} , Coinv_H|_{\calM(G)^{\fl}}) \subset \Hom^{\pi\textnormal{-}\bbA\textnormal{-a }\alg} (D|_{\calM (G)^{\fl}} , Coinv_H|_{\calM(G)^{\fl}})$$ is an equality.
\end{corollary}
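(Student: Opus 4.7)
The plan is to deduce this corollary directly from Corollary \ref{cor prop abstract setting}, applied with $\calM = \calM (G)$, $C = Coinv_H$, and the family $\pi$ defined in the statement. Two things must be checked: that $\pi$ satisfies the stability hypotheses required to even invoke Corollary \ref{cor prop abstract setting} with $\pi$, and that the two conditions (1) and (2) of that corollary hold.

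First I would verify that $\pi$ is stable under base change and under finite morphisms. The key point is that, for any $M \in \calM (G)^{\calO_Z}$, taking $K$-invariants is realized by acting with the idempotent $e_K \in \calH_G$, which is $\calO_Z$-linear; hence $K$-invariants commute with the change-of-rings functors. Concretely, for any morphism $W \to Z$ in $\Aff$ one has
$$ {}^K \bigl( \calO_W \underset{\calO_Z}{\otimes} M \bigr) \cong \calO_W \underset{\calO_Z}{\otimes} {}^K M, $$
so finite generation over $\calO_W$ follows from finite generation over $\calO_Z$. For a finite morphism $Z \to W$, the module $\calO_Z$ is finitely generated over $\calO_W$, and hence finite generation of ${}^K M$ over $\calO_Z$ yields finite generation over $\calO_W$ by transitivity. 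Punctual finite length is preserved under both operations for essentially the same reason: a cofinite ideal of $\calO_W$ (in either direction of the morphism) induces a cofinite ideal of $\calO_Z$ after applying it to $M$, reducing the required finite-length statement to the one already assumed.

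Second I would verify hypotheses (1) and (2) of Corollary \ref{cor prop abstract setting}. Hypothesis (2) — enough compact projectives $P$ in $\calM (G)$ with $Coinv_H (P)$ finitely generated over $\End (P)$ — is exactly part (2) of Claim \ref{clm M(G) and Coinv_H are suitable}. For hypothesis (1), part (1) of the same claim provides enough splitting familial compact projectives $P$ such that ${}^K P$ is finitely generated over $A := Z(\End (P))$ for every open compact $K$; Claim \ref{clm criterion rpfl} then certifies that such a $P$, viewed as an object of $\calM (G)^A$, is regularly, and hence in particular punctually, of finite length. These facts together imply that the pair $(Spec (A), P)$ lies in $\pi$. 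With both hypotheses verified, Corollary \ref{cor prop abstract setting} applies verbatim and yields the claimed isomorphism as well as the equality of algebraic and $\pi$-$\bbA$-almost algebraic morphisms. There is no real obstacle beyond bookkeeping; the only conceptual ingredient not already contained in the cited claims is the $\calO_Z$-linearity of the idempotents $e_K$ that underlies the base-change compatibility of $K$-invariants.
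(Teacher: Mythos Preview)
Your proposal is correct and follows exactly the paper's approach: the paper simply says ``applying Corollary \ref{cor prop abstract setting}'' after stating Claim \ref{clm M(G) and Coinv_H are suitable}, and you have spelled out the verification that the hypotheses of Corollary \ref{cor prop abstract setting} are met. In particular, your explicit check that the family $\pi$ is stable under base change and finite morphisms (via the $\calO_Z$-linearity of the idempotents $e_K$ and transitivity of finite generation) fills in a detail the paper leaves implicit, and your invocation of Claim \ref{clm criterion rpfl} to see that $(Spec(A),P)$ is indeed a pair is likewise a harmless unpacking of what Corollary \ref{cor prop abstract setting} already absorbs.
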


\begin{remark}
	The additional property $\pi$ of the ${}^K M$'s being finitely generated over $\calO_Z$ we don't need here, but we will need it in \S\ref{sec Bernstein morphisms}.
\end{remark}

\subsubsection{}

\subsection{The proof of Theorem \ref{thm main theorem}}

\subsubsection{}

We have a commutative diagram \begin{equation}\label{eq comm diagram} \xymatrix{\calD_{\pc}(H \backslash G) \ar@{<->}[r]^(0.42){\sim} \ar[d]^{\iota_{H \backslash G}} & \Hom (Forg, Coinv_H) \ar[d]^{\textnormal{restriction}} \\ \calA(H \backslash G)^* \ar@{<->}[r]^(0.3){\sim} & \Hom (Forg|_{\calM(G)^{\fl}} , Coinv_H|_{\calM(G)^{\fl}}) } \end{equation} where the upper horizontal isomorphism is (\ref{eq functor hom description}) and the lower horizontal isomorphism is (\ref{eq isom fnctnl fnctr}). The map $\iota_{H \backslash G}$ is the natural map, considered in \S\ref{sssec iota}. The commutativity of this diagram follows from unpacking all definitions.

\subsubsection{}

The image of the restriction map in diagram (\ref{eq comm diagram}) lies in the subspace of algebraic morphisms. We claim that under the lower horizontal isomorphism in diagram (\ref{eq comm diagram}), functionals on the left that are algebraic (in the sense of \S\ref{sssec algebraic functional}) will map to morphisms on the right that are algebraic.

\medskip

Indeed, assume that $\ell \in \calA(H \backslash G)^*$ is algebraic. Let us be given a smooth $Z \in \Aff$, an $M \in \calM(G)^{\calO_Z}$ punctually of finite length, an $m \in M$ and a morphism of $\calO_Z$-modules $\alpha : {}_H M \to \calO_Z$ (notice that here we can assume that $Z$ is smooth by Corollary \ref{cor cat pal wie for Coinv}). Tracing the definitions, we see that what we need to show is that there exists $a \in \calO_Z$ such that, for every $\lambda \in \PD(Z)$, $\lambda (a)$ equals to $\ell$ applied to the function in $\calA(H \backslash G)$ given by $$ Hg \mapsto \lambda(\alpha(gm)).$$ In other words, we need to see that the element of $\PD(Z)^*$ given by $$ \lambda \mapsto \ell \left[ Hg \mapsto \lambda(\alpha(gm)) \right]$$ is algebraic. Since $\ell$ is assumed to be algebraic, this will be so if the linear map $$\PD(Z) \to \calA(H\backslash G)$$ given by $$ \lambda \mapsto \left[ Hg \mapsto \lambda (\alpha(gm)) \right]$$ is an algebraic mapping from $Z$ to $\calA (H \backslash G)$, and this is straightforward.

\subsubsection{}

We thus obtained a commutative diagram $$ \xymatrix{\calD_{\pc}(H \backslash G) \ar@{<->}[r]^(0.42){\sim} \ar[d]^{\iota_{H \backslash G}} & \Hom (Forg, Coinv_H) \ar[d]^{\textnormal{restriction}} \\ \calA(H \backslash G)^{*,\alg} \ar@{^{(}->}[r] & \Hom^{\alg} (Forg|_{\calM(G)^{\fl}} , Coinv_H|_{\calM(G)^{\fl}}) }.$$ Since we have seen in Corollary \ref{cor cat pal wie for Coinv} that the restriction map on the right is a bijection, we obtain that $\iota_{H \bs G}$ is a bijection, establishing Theorem \ref{thm main theorem}.


\section{The construction of Bernstein morphisms}\label{sec Bernstein morphisms}

In this section, we discuss the construction of Bernstein morphisms using the approach of this article. Throughout this section, we fix a closed subgroup $H \subset G$ such that the homogeneous right $G$-space $H \backslash G$ is $F$-spherical. We will also, starting from \S\ref{ssec morphism b tilde}, fix a p-pair $(\bbP , \bbL)$ suitable for degenerating $H$ (which we define in \S\ref{sssec ppair suitable}).

\subsection{Degeneration of $H$}

\subsubsection{}

A \textbf{p-pair} is the data $(\bbP, \bbL)$ of a parabolic $\bbP \subset \bbG$ and a Levi subgroup $\bbL \subset \bbP$. We denote by $\bbP^-$ the parabolic opposite to $\bbP$ such that $\bbP^- \cap \bbP = \bbL$, and by $\bbU$ and $\bbU^-$ the unipotent radicals of $\bbP$ and $\bbP^-$, respectively. We denote then by $T$ the $F$-points of the maximal split torus in the center of $\bbL$, and by $T^+ \subset T$ (resp. $T^{++} \subset T$) the subset consisting of $\gamma \in T$ for which $\{ \gamma^{-n} u \gamma^n : n \in \bbZ_{\ge 0} \} $ is relatively compact in $\bbU (F)$ (resp. $\gamma^{-n} u \gamma^n \underset{n \to \infty}{\to} 1$) for any $u \in \bbU (F)$. We say that a property of elements of $T^{++}$ holds \textbf{deep enough} if there exists neighborhoods $V_1 , V_2$ of $1$ in $\bbU (F)$ such that the property holds for all $\gamma \in T^{++}$ satisfying $\gamma^{-1} V_1 \gamma \subset V_2 $. As is well known, there are arbitrarily small open compact subgroups $K \subset G$ satisfying Bruhat's theorem with repsect to the p-pair $(\bbP , \bbL)$ (see \cite[\S 2.1]{BeRu}) and in particular satisfying $e_K \gamma e_K \cdot e_K \delta e_K = e_K \gamma \delta e_K$ for $\gamma , \delta \in T^{+}$ (see \cite[\S 2.1, Lemma 20]{BeRu}).

\subsubsection{}\label{sssec ppair suitable}

We will say that the p-pair $(\bbP , \bbL)$ is \textbf{suitable for degenerating $H$} if the following holds:

\begin{itemize}
	\item Given an open compact subgroup $K \subset G$, there exists a neighborhood of $1$ in $G$ such that for deep enough $\gamma \in T^{++}$ we have $p (g \gamma  e_K) = p (\gamma e_K)$ for all $g$ in this neighborhood, where $p : \calH_G \to {}_H (\calH_G)$ is the projection onto $H$-coinvariants.
\end{itemize}

It is easy to verify that in fact this notion only depends on $\bbP$, i.e. if $(\bbP , \bbL)$ is suitable for degenerating $H$ and if $\bbL^{\prime} \subset \bbP$ is a Levi subgroup then $(\bbP , \bbL^{\prime})$ is also suitable for degenerating $H$.

\subsubsection{}

Let us describe how a p-pair $(\bbP , \bbL)$ suitable for degenerating $H$ can be obtained. Let $\bbP \subset \bbG$ be a parabolic such that $x_0 \cdot \bbP (F)$ is open in $X \cong H \backslash G$, i.e. $H \cdot \bbP (F)$ is open on $G$. Let $\bbL \subset \bbP$ be any Levi subgroup. We claim that $(\bbP , \bbL)$ is suitable for degenerating $H$. Indeed, we can find a neighborhood $V$ of $1$ in $\bbP (F)$ such that $\gamma^{-1} V \gamma \subset V$ for all $\gamma\in T^{++}$ and such that $V \subset K$. Then for $g$ lying in $H V$ (notice that $HV$ is open in $G$ since the orbit map $\bbP (F) \to H \backslash G$ via $x_0$ has open image and hence is open) we have $g \in Hv$ for some $v \in V$ and so for $\gamma \in T^{++}$ we have $$p(g \gamma e_K) = p(v \gamma e_K) = p (\gamma (\gamma^{-1} v \gamma) e_K) = p(\gamma e_K)$$ since $\gamma^{-1} v \gamma \in K$.

\subsubsection{}

Given a p-pair $(\bbP , \bbL)$ suitable for degenerating $H$, we define a closed subgroup $H_{\bbP , \bbL} \subset G$ (the \textbf{degenerated subgroup}) as consisting of the elements $g \in G$ which satisfy: 

\begin{itemize}
	\item Given an open compact subgroup $K \subset G$, for deep enough $\gamma \in T^{++}$ we have $p (\gamma g e_K) = p (\gamma e_K)$ where again $p : \calH_G \to {}_H (\calH_G)$ is the projection onto $H$-coinvariants.
\end{itemize}

Let us check that indeed $H_{\bbP , \bbL}$ is a closed subgroup of $G$. It is clear that $1$ belongs to $H_{\bbP , \bbL}$. Given $g_1 , g_2 \in H_{\bbP , \bbL}$ and an open compact subgroup $K \subset G$ we have $$p(\gamma g_1 g_2 e_K) = p(\gamma g_1 e_{g_2 K g_2^{-1}} g_2) = \ldots $$ and or deep enough $\gamma \in T^{++}$ we continue $$ \ldots = p (\gamma  e_{g_2 K g_2^{-1}} g_2) = p( \gamma g_2 e_K) \ldots$$ and for deep enough $\gamma \in T^{++}$ we get $$ \ldots = p (\gamma e_K).$$ To show that $H_{\bbP , \bbL}$ is closed, let $g$ be in the closure of $H_{\bbP , \bbL}$. Let $K \subset G$ be an open compact subgroup. Then there exists $g^{\prime} \in H_{\bbP , \bbL}$ such that $g^{\prime} K =g  K$. Then $p (\gamma g e_K) = p(\gamma g^{\prime} e_K)$ and so clearly $g \in H_{\bbP , \bbL}$ since $g^{\prime} \in H_{\bbP , \bbL}$.

\subsection{The morphism $\widetilde{b}_{H,\bbP , \bbL}$}\label{ssec morphism b tilde}

\subsubsection{}

We will now, following Casselman's classical ``canonical pairing" construction, define a morphism of functors $$ \widetilde{b}_{H,\bbP , \bbL} : Forg|_{\calM (G)^{\fl}} \to Coinv_H|_{\calM(G)^{\fl}}.$$

\subsubsection{}

Given $V \in \calM (G)^{\fl}$ we need to define a linear map $V \to {}_H V$. In view of the finite multiplicity property of $H$, this is the same as defining a bilinear pairing $V \otimes V^{*,H} \to \bbC$. Thus, given $v \in V$ and $\alpha \in V^{* , H}$ we need to define a number. To do this, we first fix an auxiliary $\gamma \in T^{++}$, and then consider the sequence $c^{v,\alpha , \gamma}_{\bullet} \in \bbC^{\bbZ_{\ge 0}}$ given by $$ c^{v,\alpha , \gamma}_n := \alpha (\gamma^n v).$$

\subsubsection{}\label{sssec choosing L and K}

Consider the shift operator $$ \textnormal{Shift} : \bbC^{\bbZ_{\ge 0}} \to \bbC^{\bbZ_{\ge 0}}, \quad \textnormal{Shift} ((c_n)_{n \in \bbZ_{\ge 0}}) := (c_{n+1})_{n \in \bbZ_{\ge 0}}.$$ We claim that our $c_{\bullet}^{v,\alpha , \gamma}$ is \textnormal{Shift}-finite (i.e. the smallest subspace of $\bbC^{\bbZ_{\ge 0}}$ containing $c_{\bullet}^{v,\alpha , \gamma}$ and closed under the operator \textnormal{Shift} is finite-dimensional). Indeed, we choose an open compact subgroup $L \subset G$ such that $v \in {}^L V$. Then since $(\bbP , \bbL)$ is suitable for degenerating $H$ there exists an open compact subgroup $K \subset G$ such that $p (k \delta e_L) = p (\delta e_L)$ for $k \in K$ and deep enough $\delta \in T^{++}$, so $\alpha (k \delta v) = \alpha (\delta v)$ for $k \in K$ and deep enough $\delta \in T^{++}$. We can, by making $K$ smaller, assume that $K \subset L$ and also that $K$ satisfies Bruhat's theorem with respect to the p-pair $(\bbP , \bbL)$. Then $v \in {}^K V$ and $\alpha ( (e_K \delta e_K) v) = \alpha (\delta v)$ for deep enough $\delta \in T^{++}$. Therefore, for big enough $n \in \bbZ_{\ge 0}$ we have $$c_n^{v,\alpha , \gamma} = \alpha (\gamma^n v) = \alpha ( (e_K \gamma^n e_K) v)=  \alpha ((e_K \gamma e_K)^n v).$$ Therefore, it is enough to see that $( n \mapsto \alpha ((e_K \gamma e_K)^n v))$ is $\textnormal{Shift}$-finite, as the difference between $c_{\bullet}^{v,\alpha,\gamma}$ and it is eventually zero, and thus annihilated by some power of $\textnormal{Shift}$. To that end, we consider the $\bbC$-linear map $\mu : {}^K V \to \bbC^{\bbZ_{\ge 0}}$ sending $v^{\prime}$ to $(n \mapsto \alpha ( (e_K \gamma e_K)^n v^{\prime})$. The map $\mu$ intertwines $e_K \gamma e_K$ and $\textnormal{Shift}$, and therefore, since ${}^K V$ is finite-dimensional, the image of $\mu$ consists of $\textnormal{Shift}$-finite vectors.

\subsubsection{}

For a $\textnormal{Shift}$-finite $c_{\bullet} \in \bbC^{\bbZ_{\ge 0}}$ we can uniquely write $$c_{\bullet} = {}^{\textnormal{nilp}} c_{\bullet} + {}^{\textnormal{inv}} c_{\bullet}$$ where ${}^{\textnormal{nilp}} c_{\bullet}$ (resp. ${}^{\textnormal{inv}} c_{\bullet}$ ) is contained in a finite-dimensional $\textnormal{Shift}$-closed subspace all of the generalized $\textnormal{Shift}$-eigenvalues on which are zero (resp. non-zero).

\subsubsection{}

We now define the desired number, the result of the sought-for pairing $V \otimes V^{*,H} \to \bbC$ applied to $v \otimes \alpha$, as $({}^{\textnormal{inv}} c^{v,\alpha , \gamma})_0$.

\subsubsection{}\label{sssec second description of inv}

We want here to check that the number defined does not depend on the choice of $\gamma$. Indeed, decompose ${}^K V = W_1 \oplus W_2$ where $W_1$ and $W_2$ are $e_K \gamma e_K$-invariant, and all the generalized eigenvalues of $e_K \gamma e_K$ on $W_1$ (resp. $W_2$) are zero (resp. non-zero). Write $v = v_1 + v_2$ where $v_1 \in W_1$ and $v_2 \in W_2$. It is clear that $\mu (v_2) = {}^{\textnormal{inv}} \mu(v) = ({}^{\textnormal{inv}} c^{v , \alpha , \gamma})_{\bullet}$, and so $({}^{\textnormal{inv}} c^{v,\alpha , \gamma})_0 = \alpha (v_2)$. It is therefore enough to show that, for a different $\delta \in T^{++}$, the decomposition ${}^K V = W_1 \oplus W_2$ is the same. We can find $n \in \bbZ_{\ge 1}$ such that $\delta^n / \gamma \in T^{++}$ and then it is clear that $e_K \delta e_K$ is nilpotent on $W_1$. Similarly, we can find $n \in \bbZ_{\ge 1}$ such that $\gamma^n / \delta \in T^{++}$ and then it is clear that $e_K \delta e_K$ is invertible on $W_2$. From this the desired follows.

\subsubsection{}

It is immediate to check the functoriality of the above construction, giving rise to the desired  morphism of functors $$ \widetilde{b}_{H , \bbP , \bbL} : Forg|_{\calM (G)^{\fl}} \to Coinv_H|_{\calM(G)^{\fl}}.$$

\subsection{The morphism $b_{H,\bbP , \bbL}$}

\subsubsection{}

We claim that the morphism $$ \widetilde{b}_{H,\bbP , \bbL} : Forg|_{\calM (G)^{\fl}} \to Coinv_H |_{\calM (G)^{\fl}}$$ factors via $$ Forg|_{\calM (G)^{\fl}} \to Coinv_{H_{\bbP, \bbL}} |_{\calM (G)^{\fl}},$$ and we then denote by $$ b_{H,\bbP , \bbL} : Coinv_{H_{\bbP , \bbL}} |_{\calM (G)^{\fl}} \to Coinv_H |_{\calM (G)^{\fl}}$$ the resulting morphism.

\subsubsection{}

Indeed, given $V \in \calM (G)^{\fl}$, given $v \in V$ and $\alpha \in V^{*,H}$ and given $g \in H_{\bbP, \bbL}$, it is enough to check that the sequences $n \mapsto \alpha (\gamma^n v)$ and $n \mapsto \alpha (\gamma^n g v)$ are eventually equal. This follows directly from the definition of $H_{\bbP, \bbL}$.

\subsection{Algebraicity of the morphism $b_{H,\bbP , \bbL}$}

\subsubsection{}

In this subsection we prove:

\begin{theorem}\label{thm b is algebraic}
	The morphism $$ b_{H,\bbP , \bbL} : Coinv_{H_{\bbP , \bbL}} |_{\calM (G)^{\fl}} \to Coinv_H |_{\calM (G)^{\fl}}$$ is algebraic (in the sense of \S\ref{sssec algebraic morphism}).
\end{theorem}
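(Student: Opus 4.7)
The plan is to invoke Corollary~\ref{cor cat pal wie for Coinv} with $D = Coinv_{H_{\bbP,\bbL}}$: by that corollary, in order to show $b_{H,\bbP,\bbL}$ is algebraic it suffices to show that it is $\pi$-$\bbA$-almost algebraic, and the extension to a morphism $Coinv_{H_{\bbP,\bbL}} \to Coinv_H$ on all of $\calM(G)$ then follows automatically. So one fixes a pair $(\bbA^n, M)$ in $\pi$, an element $m \in M$, and an $\calO_Z$-linear map $\alpha \colon Coinv_H(M) \to \calO_Z$, and attempts to produce $a \in \calO_Z$ when $n = 1$, or $a$ on a dense open subset when $n > 1$, witnessing algebraicity of the data $(b_{H,\bbP,\bbL}, \bbA^n, M, m, \alpha)$.

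Choose an open compact $K \subset G$ small enough, as in \S\ref{sssec choosing L and K}, so that $m \in {}^K M$, $K$ satisfies Bruhat's theorem for $(\bbP, \bbL)$, and $\alpha(k \delta m) = \alpha(\delta m)$ for $k \in K$ and deep enough $\delta \in T^{++}$. Then for large enough exponents one has $c_n := \alpha(\gamma^n m) = \alpha(T^n m)$, where $T := e_K \gamma e_K$ acts $\calO_Z$-linearly on the finitely generated $\calO_Z$-module ${}^K M$ (the finite generation being precisely the defining condition of the family $\pi$). Applying Cayley--Hamilton for endomorphisms of finitely generated modules yields a monic polynomial $\chi(t) \in \calO_Z[t]$ with $\chi(T) = 0$ on ${}^K M$.

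The next step is to extract the invariant part by an explicit idempotent: on the dense open $U \subset \bbA^n$ where the trailing non-zero coefficient of $\chi$ is invertible, one factors $\chi(t) = t^N q(t)$ with $q(0) \in \calO_U^{\times}$, and chooses $u, v \in \calO_U[t]$ with $u(t) t^N + v(t) q(t) = 1$; then $u(T) T^N \in \End_{\calO_U}(({}^K M)|_U)$ is an idempotent projecting onto the invariant part of $T$, and one sets $a_U := \alpha(u(T) T^N m) \in \calO_U$. Using the description of $b_{H,\bbP,\bbL}$ from \S\ref{sssec second description of inv}, one then verifies that for every zero-dimensional $W \subset U$ the value $a_U|_W$ coincides with the image of $m$ under the composition in the definition of algebraicity. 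This already handles the case $n > 1$, which only demands generic algebraicity.

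The remaining and main obstacle is the case $n = 1$, where the witness $a$ must live in $\calO_Z = \bbC[z]$ globally and not merely on a dense open. The proposed route is to work at each bad point $z_0 \in \bbA^1 \setminus U$ using the completion $\widehat{\calO}_{Z, z_0} \cong \bbC[[z-z_0]]$: by Hensel's lemma, $\chi(t)$ refactors over this complete local ring as $t^{N_{z_0}} q_{z_0}(t)$ with $q_{z_0}(0) \in \bbC[[z-z_0]]^{\times}$, giving a local idempotent and a formal value $a_{z_0} \in \bbC[[z-z_0]]$; one then has to verify that the compatible family $(a_U, \{a_{z_0}\})$ is the collection of formal expansions of a single element $a \in \calO_Z$. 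The delicate point, and the step I expect to be the hardest, is that for a purely abstract $\calO_Z$-linear endomorphism of a finitely generated $\calO_Z$-module the rank of the invariant part may jump at bad points and the invariant projection does not globalize, so this step must exploit features of the representation-theoretic setup --- in particular the $H$-invariance of $\alpha$ and the smooth-$G$-module structure of $M$ --- to rule out pathological jumping and ensure algebraic gluing across the bad points.
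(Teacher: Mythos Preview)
Your reduction via Corollary~\ref{cor cat pal wie for Coinv} and your treatment of the generic case $n>1$ are essentially what the paper does: pick $K$ as in \S\ref{sssec choosing L and K}, let $\tau=e_K\gamma e_K$ act on the finitely generated $\calO_Z$-module ${}^K M$, take a monic annihilating polynomial, invert its lowest nonzero coefficient, and build the idempotent projection onto the invertible part by a B\'ezout identity. (A small point: with $D=Coinv_{H_{\bbP,\bbL}}$ the datum $m$ should lie in ${}_{H_{\bbP,\bbL}}M$, not in $M$; in practice you are tacitly passing to $\widetilde{b}_{H,\bbP,\bbL}$ with $D=Forg$, which the paper does as well and which is harmless.)

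The genuine gap is in your $n=1$ step. The Hensel factorization you invoke does \emph{not} produce $\chi(t)=t^{N_{z_0}}\,q_{z_0}(t)$ with $q_{z_0}(0)$ a unit; it only yields $\chi=p\cdot q$ with $p$ monic, $p\equiv t^{N_{z_0}}\bmod(z-z_0)$, and $q(0)$ a unit. For instance, with ${}^K M=\bbC[z]^2$ and $\tau=\left(\begin{smallmatrix}0&0\\0&z\end{smallmatrix}\right)$ one has $\chi(t)=t^2-zt$, which is not of the form $t^2\cdot(\text{unit at }0)$ over $\bbC[[z]]$; the Hensel factor is $p(t)=t^2-zt$, $q(t)=1$, and the corresponding idempotent projects onto $\Ker q(\tau)=0$, not onto the invariant part of $\tau$. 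So the local idempotents you construct need not compute $({}^{\textnormal{inv}}c^{m,\alpha,\gamma})_0$, and your gluing scheme cannot get off the ground. You correctly diagnose that some representation-theoretic input is required, but you do not identify it.

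The paper's route at this point is quite different and avoids any local--global gluing. One invokes Bernstein's stabilization (\cite[\S3.2--3.3]{BeRu}): for some $n_0\ge1$ the operator $\tau^{n_0}$ on ${}^K M$ is \emph{stable}, i.e.\ ${}^K M=\Ker(\tau^{n_0})\oplus\Image(\tau^{n_0})$ with $\tau^{n_0}$ invertible on its image. Replacing $\gamma$ by $\gamma^{n_0}$ one may assume $\tau$ itself is stable. Then an elementary lemma (Lemma~\ref{lem stable is good}) shows that a stable $\bbC[y]$-linear endomorphism of a finitely generated $\bbC[y]$-module is annihilated by a monic polynomial whose lowest nonzero coefficient is a unit in $\bbC[y]$, so one is back in the situation you already handled, globally on $\bbA^1$ rather than generically. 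This is precisely the ``feature of the representation-theoretic setup'' that you were looking for.
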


It is immediate that $b_{H,\bbP , \bbL}$ is algebraic if and only if $\widetilde{b}_{H,\bbP , \bbL}$ is algebraic.

\subsubsection{}\label{sssec enough admissibles needed}

By Corollary \ref{cor cat pal wie for Coinv}, to show that $\widetilde{b}_{H,\bbP , \bbL}$ is algebraic it is enough to check that the data $(\widetilde{b}_{H,\bbP , \bbL} , Z , M , m , \alpha)$ is generically algebraic if $Z \cong \bbA^n$ for $n \in  \bbZ_{\ge 1}$ and in fact algebraic if $Z \cong \bbA^1$. Here $m \in M$ and $\alpha \in \Hom_{\calO_Z} ({}_H M , \calO_Z)$.

\subsubsection{}

Let us fix an open compact subgroup $K \subset G$ exactly as in \S\ref{sssec choosing L and K} (substituting $M$ for $V$and $m$ for $v$). Let us denote by $\tau$ the $\calO_Z$-linear endomorphism of ${}^K M$ given by acting by $e_K \gamma e_K$.

\subsubsection{}\label{sssec exists f with unit}

Suppose that that there exists a monic polynomial $f \in \calO_Z [x]$ such that $f(\tau) = 0$, and whose first non-zero coefficient is a unit in $\calO_Z$. Let $a \in \calO_Z^{\times}$ be the first non-zero coefficient of $f$. Write $f(x) = x^d \cdot (x f^{\prime} (x) + a)$ where $d \in \bbZ_{\ge 0}$ and $f^{\prime} \in \calO_Z [x]$. Then, since $a$ is a unit in $\calO_Z$, the polynomials $x^d$ and $x f^{\prime} (x) + a$ are relatively prime, and therefore we can find $f_1 , f_2 \in \calO_Z [x]$ such that $f_1 (x) x^d + f_2 (x) (x f^{\prime} (x) + a) = 1$. Set $h(x) := f_1 (x) x^d$ and define $c := \alpha (h(\tau) m) \in \calO_Z$.

\medskip

We first claim that if $Z$ is $0$-dimensional, then $c$ coincides with the image of $m$ under $M \xrightarrow{(\widetilde{b}_{H,\bbP,\bbL})_M} {}_H M \xrightarrow{\alpha} \calO_Z$. Indeed, notice that $h(\tau)m$ lies in $\Ker (\tau \cdot f^{\prime} (\tau) + a)$, which is a $\tau$-invariant subspace of ${}^K M$ on which $\tau$ is invertible. The difference $m - h(\tau)m$ lies in $\Ker (\tau^d)$, which is a $\tau$-invariant subspace of ${}^K M$ on which $\tau$ is nilpotent. Therefore, following \S\ref{sssec second description of inv}, we see that, given  any functional $\ell \in \calO_W^*$, we have $\ell (\alpha (\widetilde{b}_{H,\bbP , \bbL} (m))) = \ell (\alpha (h(\tau)m)) = \ell (c)$. Since $\ell$ was arbitrary, we get $\alpha (\widetilde{b}_{H,\bbP , \bbL} (m)) = c$, as desired.

\medskip

Not assuming that $Z$ is $0$-dimensional anymore, we claim that $c$ attests to $(\widetilde{b}_{H,\bbP , \bbL} , Z , M , m , \alpha)$ being algebraic. In other words, that for every $0$-dimensional closed subscheme $W \subset Z$, the image of $c$ under $\calO_Z \to \calO_W$ coincides with the image of $m$ under $$ M \to \calO_W \underset{\calO_Z}{\otimes} M \xrightarrow{(\widetilde{b}_{H,\bbP,\bbL})_{\calO_W \underset{\calO_Z}{\otimes} M}} {}_H (\calO_W \underset{\calO_Z}{\otimes} M) \cong \calO_W \underset{\calO_Z}{\otimes} {}_H M \xrightarrow{\alpha} \calO_W.$$ This is clear from the preceding discussion of the $0$-dimensional case and $\calO_W \underset{\calO_Z}{\otimes} {}^K M \cong {}^K (\calO_W \underset{\calO_Z}{\otimes} M)$.

\subsubsection{}

Building on \S\ref{sssec exists f with unit}, we can now see that $(\widetilde{b}_{H,\bbP , \bbL} , Z , M , m , \alpha)$ is always generically algebraic, say assuming that $Z$ is integral (as we anyway only need to know this for $Z$ being an affine space). Namely, choose a monic polynomial $f \in \calO_Z [x]$ such that $f (\tau) = 0$, and let $a \in \calO_Z$ be the first non-zero coefficient of $f$. Then we can pullback to $Z_a$, obtaining data $(\widetilde{b}_{H,\bbP , \bbL} , Z_a , \calO_{Z_a} \underset{\calO_Z}{\otimes} M , [m] , [\alpha])$, for which we can take the same polynomial $f \in \calO_Z [x] \to \calO_{Z_a} [x]$, whose first non-zero coefficient in $\calO_{Z_a}$ is already a unit.

\subsubsection{}

It is left to show that if $Z \cong \bbA^1$ then $(\widetilde{b}_{H,\bbP , \bbL} , Z , M , m , \alpha)$ is algebraic, by showing that we can find in this case $f \in \calO_Z [x]$ and $n_0 \in \bbZ_{\ge 1}$ such that $f (\tau^{n_0}) = 0$ and such that the first non-zero coefficient of $f$ is a unit in $\calO_Z$ (notice that we can replace $\gamma$ with $\gamma^{n_0}$, and so $\tau$ with $\tau^{n_0}$). By the stabilization theory of Bernstein (\cite[\S 3.3, Lemma 34]{BeRu} coupled with \cite[\S 3.2, Jacquet's lemma (final version)]{BeRu}), for some $n_0 \in \bbZ_{\ge 1}$ the endomorphism $\tau^{n_0}$ of ${}^K V$ is stable, i.e. ${}^K V = \Ker (\tau^{n_0}) \oplus \Image (\tau^{n_0})$ and the restriction of $\tau^{n_0}$ to $\Image (\tau^{n_0})$ is invertible. We will be therefore done using the following lemma:

\begin{lemma}\label{lem stable is good}
	Let $A = \bbC [y]$ be the ring of polynomials in one variable. Let $N$ be a finitely generated $A$-module. Let $\tau : N \to N$ be an $A$-linear endomorphism. Assume that $\tau$ is stable, i.e. $N = \Ker (\tau) \oplus \Image (\tau)$ and $\tau |_{\Image (\tau)} : \Image (\tau) \to \Image (\tau)$ is invertible. Then there exists a monic polynomial $f \in A [x]$, whose first non-zero coefficient is a unit in $A$, such that $f (\tau) = 0$.
\end{lemma}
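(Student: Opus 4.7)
The plan is to use stability to decompose $N = \ker(\tau) \oplus \Image(\tau)$, reducing to the case where $\tau$ is invertible, and then to exploit the PID structure of $A = \bbC[y]$. Write $N_0 := \ker(\tau)$ and $N_1 := \Image(\tau)$, so that $\tau|_{N_0} = 0$ while $\tau_1 := \tau|_{N_1}$ is an $A$-linear automorphism. If I produce a monic $f_1 \in A[x]$ with $f_1(\tau_1) = 0$ and $f_1(0) \in A^{\times}$, then $f(x) := x f_1(x)$ is monic with first non-zero coefficient $f_1(0)$ (a unit), and $f(\tau) = 0$ on all of $N$.

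To construct $f_1$, I would consider the canonical torsion submodule $T \subset N_1$, which is $\tau_1$-invariant by $A$-linearity of $\tau_1$, with free quotient $\overline{N_1} := N_1/T \cong A^r$. The induced endomorphism $\overline{\tau_1}$ on $\overline{N_1}$ is surjective (because $\tau_1$ is), hence an automorphism by the standard fact that surjective endomorphisms of finitely generated modules over Noetherian rings are isomorphisms; therefore $\overline{\tau_1}$ is represented by some matrix $M \in GL_r(A)$. Its characteristic polynomial $\chi(x) := \det(xI - M) \in A[x]$ is monic of degree $r$ with $\chi(0) = (-1)^r \det(M) \in A^{\times}$, and $\chi(\overline{\tau_1}) = 0$ by Cayley--Hamilton.

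For the torsion part $T$, I use that it is annihilated by a product $\prod_{i=1}^s (y-\lambda_i)^{k_i}$ and decomposes canonically as $T = \bigoplus_i T_{\lambda_i}$ into primary components. Each $T_{\lambda_i}$ is $\tau_1$-invariant (by canonicity) and finite-dimensional over $\bbC$, being a module over the finite-dimensional algebra $\bbC[y]/(y-\lambda_i)^{k_i}$; and on it $\tau_1$ acts as an invertible $\bbC$-linear endomorphism. Its ordinary $\bbC$-characteristic polynomial $p_{\lambda_i}(x) \in \bbC[x]$ is monic with non-zero constant term, so $f_T(x) := \prod_i p_{\lambda_i}(x) \in \bbC[x] \subset A[x]$ is a monic polynomial annihilating $\tau_1$ on $T$, with $f_T(0) \in \bbC^{\times} \subset A^{\times}$.

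Finally, set $f_1 := \chi \cdot f_T$: it is monic with $f_1(0) = \chi(0) f_T(0) \in A^{\times}$; and for any $v \in N_1$, $\chi(\tau_1)(v) \in T$ (since $\chi(\overline{\tau_1}) = 0$ in the quotient), after which $f_T(\tau_1)$ kills it, giving $f_1(\tau_1) = 0$. The subtle point to be careful about is that the short exact sequence $0 \to T \to N_1 \to \overline{N_1} \to 0$ generally does not split $\tau_1$-equivariantly, so a naive Cayley--Hamilton applied to a direct presentation of $N_1$ (which exists but whose matrix need not be invertible) can fail to produce unit constant term; we sidestep this by using only the canonical torsion inclusion and free quotient, and leveraging that $\chi(\tau_1)$ automatically lands in $T$.
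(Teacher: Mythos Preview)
Your proof is correct and follows essentially the same route as the paper: reduce to invertible $\tau$, pass to the torsion submodule and its free quotient, use Cayley--Hamilton with a $GL_r(A)$-matrix on the free part and $\bbC$-linear characteristic polynomials on the finite-dimensional torsion pieces. The only organizational difference is that the paper packages the extension step via an auxiliary notion of a ``good'' pair $(N,\tau)$ (existence of generators with unit-determinant coefficient matrix) and a block-triangular argument showing goodness is closed under extensions, then inducts on the torsion part by peeling off $N^{y-c}$; you instead multiply the two polynomials directly, using that $\chi(\tau_1)$ lands in $T$ and $f_T(\tau_1)$ kills $T$. These are equivalent ways of handling the non-equivariant splitting, and your direct product argument is slightly more economical.
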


\begin{proof}
	Given in \S\ref{ssec proof of lem stable is good}.
\end{proof}

\subsection{The Bernstein morphism $B_{H,\bbP, \bbL}$}

\subsubsection{}

Since $$ b_{H,\bbP, \bbL} : Coinv_{H_{\bbP, \bbL}} |_{\calM (G)^{\fl}} \to Coinv_H |_{\calM (G)^{\fl}}$$ is algebraic, using the previous results of this article we obtain that it extends uniquely to a morphism $$ B_{H,\bbP, \bbL} : Coinv_{H_{\bbP, \bbL}} \to Coinv_H.$$ Since for $D,C \in \Fun^l (\calM (G) , \Vect)$ we have (by restriction) $$ \Hom (D,C) \cong \Hom_{\calH_G^{\op}} (D(\calH_G)  , C(\calH_G)),$$ we obtain $$ \Hom (Coinv_{H_{\bbP, \bbL}} , Coinv_{H}) \cong \Hom_{\calH_G^{\op}} ( Coinv_{H_{\bbP, \bbL}} (\calH_G) , Coinv_H (\calH_G) ) \cong $$ $$ \cong \Hom_{G^{\op}} (\calD_c (H_{\bbP, \bbL} \bs G)^{\sm} , \calD_c (H \backslash G)^{\sm}).$$ Therefore the information of our morphism $B_{H,\bbP, \bbL}$ is equivalent to that of a $G$-equivariant morphism (which we denote by abuse of notation by the same name) $$ B_{H , \bbP, \bbL}, :  \calD_c (H_{\bbP, \bbL} \bs G)^{\sm} \to \calD_c (H \bs G)^{\sm},$$ which is called the \textbf{Bernstein morphism}.

\subsection{Another interpretation}

\subsubsection{}

Recall isomorphism (\ref{eq isom fnctnl fnctr}). Applying it to $\widetilde{b}_{H,\bbP,\bbL}$, we obtain a functional $\textnormal{asymp}_{H , \bbP,\bbL} \in \calA(H \backslash G)^*$. Thus, $\textnormal{asymp}_{H , \bbP , \bbL} (\phi)$ is equal to the value at $0$ of the ${}^{\textnormal{inv}} (-)$-part of the sequence $n \mapsto \phi (H \gamma^n)$.

\subsubsection{}

It is clear that the functional $\textnormal{asymp}_{H , \bbP,\bbL}$ is $H_{\bbP , \bbL}$-invariant.

\subsubsection{}

Applying Frobenius reciprocity to the $H_{\bbP , \bbL}$-invariant functional $\textnormal{asymp}_{H , \bbP,\bbL}$, we see that there exists a unique $G$-equivariant linear map $$ \textnormal{Asymp}_{H , \bbP,\bbL} : \calA (H \bs G) \to \calA (H_{\bbP,\bbL} \bs G)$$ which, when composed with the evaluation at the origin of $H_{\bbP , \bbL} \bs G$, yields $\textnormal{asymp}_{H , \bbP,\bbL}$.

\subsubsection{}

We can now re-interpret the Bernstein morphism construction as follows:

\begin{theorem}\label{thm Asymp is continuous}
	The map $$ \textnormal{Asymp}_{H , \bbP,\bbL} : \calA (H \bs G) \to \calA (H_{\bbP,\bbL} \bs G)$$ is continuous (with repsect to the topologies  of \S\ref{ssec function spaces}).
\end{theorem}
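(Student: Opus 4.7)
The plan is to reduce the continuity of $\textnormal{Asymp}_{H,\bbP,\bbL}$ to the continuity of the scalar functional $\textnormal{asymp}_{H,\bbP,\bbL}$ on $\calA(H \bs G)$, and then to deduce the latter from the algebraicity of $\widetilde{b}_{H,\bbP,\bbL}$ combined with Theorem \ref{thm main theorem}.

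For the reduction, both $\calA(H \bs G)$ and $\calA(H_{\bbP,\bbL} \bs G)$ carry subspace topologies from their ambient ${}^{\sm} C^{\infty}$-spaces, which in turn are colimits, over open compact subgroups $K \subset G$, of the pointwise-convergence topologies on the ${}^K C^{\infty}$-parts. Since $\textnormal{Asymp}_{H,\bbP,\bbL}$ is $G$-equivariant, it sends ${}^K$-invariants into ${}^K$-invariants, so continuity amounts to requiring, for each such $K$ and each coset $H_{\bbP,\bbL}g \in (H_{\bbP,\bbL} \bs G)/K$, that the evaluation $\phi \mapsto \textnormal{Asymp}_{H,\bbP,\bbL}(\phi)(H_{\bbP,\bbL}g)$ be continuous on ${}^K \calA(H \bs G)$. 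But by $G$-equivariance this evaluation factors as right translation by $g$ (which is continuous, indeed a linear isomorphism onto ${}^{g^{-1}Kg} \calA(H \bs G)$) followed by $\textnormal{asymp}_{H,\bbP,\bbL}$; so it suffices to show that $\textnormal{asymp}_{H,\bbP,\bbL}$ itself is continuous on $\calA(H \bs G)$.

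For the second step, $\textnormal{asymp}_{H,\bbP,\bbL}$ is by construction the functional corresponding to $\widetilde{b}_{H,\bbP,\bbL}$ under the isomorphism (\ref{eq isom fnctnl fnctr}), and by Theorem \ref{thm b is algebraic} (together with the paper's remark that $\widetilde{b}_{H,\bbP,\bbL}$ is algebraic iff $b_{H,\bbP,\bbL}$ is) the morphism $\widetilde{b}_{H,\bbP,\bbL}$ is algebraic. The commutative diagram (\ref{eq comm diagram}), combined with the bijectivity of the restriction map supplied by Corollary \ref{cor cat pal wie for Coinv}, shows that any algebraic element of $\Hom^{\alg}(Forg|_{\calM(G)^{\fl}}, Coinv_H|_{\calM(G)^{\fl}})$ lifts uniquely to an element of $\Hom(Forg, Coinv_H) \cong \calD_{\pc}(H \bs G)$, whose restriction to $\calA(H \bs G)$ matches the given morphism under (\ref{eq isom fnctnl fnctr}). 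Applied to $\widetilde{b}_{H,\bbP,\bbL}$, this yields an element $\delta \in \calD_{\pc}(H \bs G)$ whose restriction to $\calA(H \bs G)$ is $\textnormal{asymp}_{H,\bbP,\bbL}$; in particular the latter is continuous.

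The essential mathematical content is already packaged into Theorems \ref{thm main theorem} and \ref{thm b is algebraic}. I expect the main obstacle to be purely bookkeeping: carefully verifying that the correspondence in diagram (\ref{eq comm diagram}) sends algebraic morphisms to functionals in the image of $\iota_{H \bs G}$ (rather than the easier direction already spelled out in the proof of Theorem \ref{thm main theorem}), and cleanly handling the $G$-equivariance reduction at the level of the colimit topology on ${}^{\sm} C^{\infty}(H_{\bbP,\bbL} \bs G)$.
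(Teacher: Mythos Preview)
Your proof is correct and rests on the same core idea as the paper's: the algebraicity of $\widetilde{b}_{H,\bbP,\bbL}$ (Theorem \ref{thm b is algebraic}) combined with the categorical Paley--Wiener isomorphism (Corollary \ref{cor cat pal wie for Coinv}) forces the relevant functionals on $\calA(H \bs G)$ to lie in the image of $\iota_{H \bs G}$, hence to be continuous.

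The organizational route differs slightly. The paper does not single out the basepoint functional $\textnormal{asymp}_{H,\bbP,\bbL}$; instead it treats all continuous functionals on $\calA(H_{\bbP,\bbL} \bs G)$ at once, via a commutative square whose top row is composition with $b_{H,\bbP,\bbL}$ from $\Hom(Forg,Coinv_{H_{\bbP,\bbL}})$ to $\Hom(Forg|_{\calM(G)^{\fl}},Coinv_H|_{\calM(G)^{\fl}})$, and whose left column is the isomorphism $\calD_{\pc}(H_{\bbP,\bbL}\bs G)\cong \Hom(Forg,Coinv_{H_{\bbP,\bbL}})$. Your reduction by $G$-equivariance to the single functional $\textnormal{asymp}_{H,\bbP,\bbL}$ is arguably more elementary and avoids that extra diagram; the paper's version, on the other hand, makes transparent that the dual of $\textnormal{Asymp}_{H,\bbP,\bbL}$ is exactly the Bernstein morphism $B_{H,\bbP,\bbL}$ already constructed. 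Either way, the substance is identical.
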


\begin{proof}

	We need to show that pre-composing a continuous functional on $\calA (H_{\bbP , \bbL} \bs G)$ with $\textnormal{Asymp}_{H,\bbP,\bbL}$ yields a continuous functional on $\calA (H \bs G)$. Each continuous functional on $\calA (H_{\bbP , \bbL} \bs G)$ is the restriction of a continuous functional on ${}^{\sm} C^{\infty} (H_{\bbP , \bbL} \bs G)$, the space of which we denoted by $\calD_{\pc} (H_{\bbP , \bbL} \bs G)$. So we need to show that the image of the map $ \calD_{\pc} (H_{\bbP , \bbL} \bs G) \to \calA (H \bs G)^*$ dual to $\textnormal{Asymp}_{H,\bbP,\bbL}$ consists of continuous functionals.
	We have the following commutative diagram: $$ \xymatrix{ \Hom (Forg , Coinv_{H_{\bbP,\bbL}}) \ar@{<->}[d]_{\sim} \ar[r] & \Hom (Forg|_{\calM (G)^{\fl}} , Coinv_H|_{\calM (G)^{\fl}} ) \ar@{<->}[d]^{\sim} \\ \calD_{\pc} (H_{\bbP,\bbL} \bs G) \ar[r] & \calA (H \bs G)^* }.$$ Here the lower map is our dual to $\textnormal{Asymp}_{H,\bbP,\bbL}$. The upper map is composition with $b_{H,\bbP,\bbL}$ (preceded with restriction to $\calM (G)^{\fl}$). The left isomorphism is isomorphism (\ref{eq functor hom description}) and the right isomorphism is isomorphism (\ref{eq isom fnctnl fnctr}). The commutativity of the diagram is a straight-forward chasing of the definitions. Since have established that continuous functionals on the lower right are the same as algebraic functionals, and algebraic functionals match to algebraic morphisms on the top right, we see that it is enough to see that the image of the upper arrow consists of algebraic morphisms. But this is clear since we have already established that $b_{H , \bbP , \bbL}$ is an algebraic morphism and so extends to a morphism $B_{H , \bbP , \bbL} : Coinv_{H_{\bbP,\bbL}} \to Coinv_H$, and so our upper arrow is equal to composition with $B_{H , \bbP , \bbL}$ followed by restriction to $\calM (G)^{\fl}$, so clearly its image consists of algebraic morphisms.
\end{proof}

\subsubsection{}

To conclude, in this interpretation the Bernstein map $$ B_{H , \bbP, \bbL}, :  \calD_{\pc} (H_{\bbP, \bbL} \bs G) \to \calD_{\pc} (H \bs G)$$ is the dual of $\textnormal{Asymp}_{H , \bbP,\bbL}$, well defined since the latter is continuous.


\section{Proofs}\label{sec remaining proofs}

In this section we provide proofs for the statements which were left without proof earlier in the article.

\subsection{Proof of Claim \ref{clm criterion rpfl}}\label{ssec proof of clm criterion rpfl}

\subsubsection{}

Denote $R := \End (P)^{\op}$. The essential image of $$P \underset{R}{\otimes} - : \calM (R) \to \calM$$ is a direct summand in $\calM$, in particular a Serre subcategory. Therefore, it is clear that we can replace $(\calM,P)$ of the claim with $(\calM(R) , R)$.

\subsubsection{}

First we show that $R$ is punctually of finite length as an $A$-module (recall that $A$ stands for the center of $R$). Let $I \subset A$ be a cofinite ideal. Since $R$ is a finitely-generated $A$-module, $R / IR$ is finite-dimensional, and therefore of finite length.

\subsubsection{}

Next, for the ``regularly" part, we need to show that given a morphism of $R$-modules $R \to V$, where $V$ is an $R$-module of finite length, there exists a cofinite ideal $I \subset A$ such that $R \to V$ factors via $R \to R/RI$. An easy inductive argument shows that it is enough to treat the case when $V$ is a simple $R$-module and $R \to V$ is a surjection. Then $R \to V$ is isomorphic to the canonical projection $R \to R / \frakm$ for a left maximal ideal $\frakm \subset R$. Denote $\frakn := \frakm \cap A$. Then $\frakn$ is an ideal in $A$ (not equal to $A$ itself). Let $J \subset A$ be an ideal containing $\frakn$ properly. Then $RJ + \frakm$ is a left ideal in $R$ containing $\frakm$ properly, and hence $R = RJ + \frakm$. So $J(R/\frakm) = R / \frakm$ and therefore by Nakayama's lemma there exists $j \in J$ such that $(1+j)R \subset \frakm$, so $1 + j \in \frakm \cap A = \frakn$, so $1 \in J$, i.e. $J = A$. Therefore $\frakn$ is a maximal ideal in $A$, and in particular a cofinite ideal. As $R \to R / \frakm$ factors via $R / R \frakn$, we are done.

\subsection{Proof of Claim \ref{clm criterion finite generation}}\label{ssec proof of clm criterion finite generation}

\subsubsection{}

Let $P \in \calM$ be a compact projective object. Notice that the direct sum of a finite family of compact projective objects $P$ for which $C(P)$ is a finitely generated $\End(P)$-module again has this property. Therefore we can find a compact projective object $Q \in \calM$ for which $C(Q)$ is a finitely generated $\End(Q)$-module, such that $P$ is a direct summand of $Q$. Write $Q = P \oplus P^{\prime}$. Then since $C(Q) \cong C(P) \oplus C(P^{\prime})$ is finitely generated over $\End(Q)$, we can find finitely many elements $c_1 , \ldots , c_a \in C(P)$ and $c_1^{\prime} , \ldots , c_b^{\prime} \in C(P^{\prime})$ such that $$C(Q) = \sum \End (Q) \cdot c_i + \sum \End (Q) \cdot c_j^{\prime},$$ and decomposing we see in particular that $$ C(P) = \sum \End (P) \cdot c_i + \sum \Hom (P^{\prime} , P) \cdot c_j^{\prime}.$$ Therefore, it is enough to show that $\Hom (P^{\prime} , P)$ is a finitely generated $\End(P)$-module.

\subsubsection{}

So, we are left to show that for two compact projective objects $P,P^{\prime} \in \calM$, $\Hom (P^{\prime} , P)$ is a finitely generated $\End (P)$-module. The module $P^{\prime}$ can be realized as a direct summand in a finite direct sum of splitting familial compact projective objects, and so we can replace it by this direct sum, and then replace this direct sum by a single summand. In other words, we can reduce to assuming that $P^{\prime}$ itself is a splitting familial compact projective object. Then we can replace $P$ by its component in the direct summand of $\calM$ which is the essential image of $- \underset{\End (P^{\prime})}{\otimes} P^{\prime}$ (as $P^{\prime}$ is splitting). Now, denoting $R := \End(P^{\prime})^{\op}$, we can, via the equivalence of $\calM(R)$ with this direct summand, translate the problem to $\calM (R)$. Namely, we are given a compact projective object $S \in \calM (R)$, and want to show that $S$ is finitely generated over $\End_R (S)$. That $S$ is a compact projective in $\calM (R)$ simply means that $S$ is a finitely generated projective $R$-module. Now, as $P^{\prime}$ is familial, $R$ is a finitely generated module over its center $A$. Then $S$ is a finitely generated module over $A$. Since $A$ acts on $S$ via $A \to \End_R (S)$, we obtain that $S$ is indeed finitely generated over $\End_R (S)$.

\subsection{Proof of Proposition \ref{prop abstract setting}}\label{ssec proof of prop abstract setting}

\subsubsection{}

To abbreviate, by a \textbf{good pair} $(A,M)$ we will mean a pair in $\pi$, such that additionally $M \in \calM^A$ is regularly punctually of finite length, and such that $C(M)$ is a finitely generated $A$-module.

\subsubsection{}\label{sssec good is injective}

Let $(A,M)$ be a good pair. Let us notice first that since $C(M)$ is a finitely generated $A$-module, as we run over morphisms $\underline{M} \to V$ where $V \in \calM^{\fl}$, the resulting maps $C(\underline{M}) \to C(V)$ are jointly injective. Indeed, since those maps include the projection maps $$C(M) \to C(M/IM) \cong C(M) / I C(M)$$ as $I$ runs over the set of cofinite ideals in $A$, this follows from $C(M)$ being a finitely generated $A$-module and Krulls intersection theorem.

\subsubsection{}\label{sssec pair is good}

Let $(A,M)$ be a good pair. We will show that there exists a unique linear map $$ T_{\underline{M}} : D(\underline{M}) \to C(\underline{M})$$ such that for every morphism $\underline{M} \to V$ with $V\in \calM^{\fl}$, the following diagram commutes: $$ \xymatrix{D(\underline{M}) \ar[r]^{T_{\underline{M}}} \ar[d] & C(\underline{M}) \ar[d] \\ D(V) \ar[r]^{t_V} & C(V)}.$$ Uniqueness follows from \S\ref{sssec good is injective}.

\subsubsection{}

Assuming what is claimed in \S\ref{sssec pair is good}, let us now finish the proof of Proposition \ref{prop abstract setting}. First, given good pairs $(A,M), (B,N)$, by \S\ref{sssec good is injective}, it is immediate to see that for every morphism $\underline{M} \to \underline{N}$, the diagram $$ \xymatrix{ D(\underline{M}) \ar[r]^{T_{\underline{M}}} \ar[d] & C(\underline{M}) \ar[d] \\ D(\underline{N}) \ar[r]^{T_{\underline{N}}} & C(\underline{N})} $$ commutes. Denote by $\calP \subset \calM$ the full subcategory consisting of objects $\underline{M}$ for a good pair $(A,M)$, which are compact projective. Aggregating all the $T_{\underline{M}}$'s, we obtain a morphism of functors $$ T : D|_{\calP} \to C|_{\calP}$$ extending the morphism $$ t : D|_{\calM^{\fl}} \to C|_{\calM^{\fl}}.$$ By the assumption of Proposition \ref{prop abstract setting}, $\calP$ generates the category $\calM$, and therefore the restriction $$ \Hom (D , C) \to \Hom (D|_{\calP} , C|_{\calP})$$ is an isomorphism. Lifting $T$ under this isomorphism, the proof of Proposition \ref{prop abstract setting} is done (the injectivity of the map of $\Hom$-spaces in the formulation of Proposition \ref{prop abstract setting} is also now clear, in view of the uniqueness of the $T_{\underline{M}}$'s).

\subsubsection{}

It is left to show what was promised in \S\ref{sssec pair is good}. Let $(A,M)$ be a good pair. We will proceed by induction on the dimension of $Spec(A)$. If the dimension of $Spec(A)$ is equal to $0$, the claim is clear since in that case $\underline{M}$ has finite length and we can take $T_{\underline{M}}$ to be $t_{\underline{M}}$. So we assume that the dimension of $Spec(A)$ if bigger than $0$, and that the claim was showed for smaller dimensions. Furthermore, we can assume that $Spec(A)$ is an affine space. Indeed, by Noether's normalization lemma, we can find a finite morphism $B \to A$ in $\Alg$ such that $Spec(B)$ is an affine space. Then $(B, M^A_B)$ is a good pair and $\underline{M^A_B} = \underline{M}$, so that we can replace $A$ with $B$.

\medskip

We want to show that for every $d \in D(M)$ there exists $c \in C(M)$ such that for every morphism $\underline{M} \to V$ where $V \in \calM^{\fl}$, the image of $c$ under $C(\underline{M}) \to C(V)$ coincides with the image of $d$ under $D(\underline{M}) \to D(V) \xrightarrow{t_V} C(V)$. Here, since $M$ is regularly punctually of finite length, it is enough to restrict attention to $\underline{M} \to V$ running over the canonical $\underline{M} \to \underline{M/IM}$ as $I$ runs over cofinite ideals in $A$. Let us denote by $\calI_A$ the set of ideals $I \subset A$ for which the dimension of $Spec(A/I)$ is smaller than the dimension of $Spec(A)$ (which, since $Spec(A)$ is smooth and connected, are the same as non-zero ideals in $A$). For every $I \in \calI_A$, let us denote by $c_I$ the image of $d$ under $$D(M) \to D(M/IM) \xrightarrow{T_{M/IM}} C(M/IM) \cong C(M) / I C(M)$$ (this is possible by the induction hypothesis, since the dimension of $Spec(A/I)$ is smaller than that that of $Spec(A)$ and hence $(A/I , M/IM)$ is a good pair for which the claim is already known). We thus obtain a system of elements $$ (c_I)_{I \in \calI_A} \in \prod_{I \in \calI_A} C(M)/IC(M),$$ coordinated in the sense that given $J \subset I$, the image of $c_J$ under the projection $C(M)/J C(M) \to C(M) / I C(M)$ is equal to $c_I$. We would like to see that there exists an element $c \in C(M)$ such that for every $I \in \calI_A$, the image of $c$ under the projection $C(M) \to C(M) / I C(M)$ is equal to $c_I$. What we also know about our system $(c_I)_{I \in \calI_A}$, by the $\pi$-$\bbA$-almost algebraicity of $t$, is as follows. Let $\alpha : C(M) \to A$ be a morphism of $A$-modules. Then the resulting system $(\alpha(c_I))_{I \in \calI_A} \in \prod_{I \in \calI_A} A/I$ does come from an element of $A$ in case that the dimension of $Spec(A)$ is $1$, and generically comes from an element of $A$ in case that the dimension of $Spec(A)$ is strictly larger than $1$. Proposition \ref{prop app main} and Proposition \ref{prop app main 2} then guarantee that there exists an element $c \in C(M)$ as desired.

\subsection{Proof of Claim \ref{clm M(G) and Coinv_H are suitable}}\label{ssec proof of clm M(G) and Coinv_H are suitable}

\subsubsection{}

The first condition follows from Bernstein's theory of decomposition of $\calM (G)$ with respect to the center (see, for example, \cite[\S VI]{Re} and \cite{BeRu}). Namely, let $\mathbb{P} \subset \mathbb{G}$ be a parabolic with Levi quotient $\mathbb{P} \to \mathbb{L}$ (and denote as usual $L := \mathbb{L} (F)$). Then considering the algebra $\calO_{Z_L}$ of regular functions on the variety $Z_L$ of unramified characters of $L$, and a cuspidal irreducible module $E \in \calM (L)$, we have the normalized parabolic induction $$P_{L,E} := pind^L_G (E \otimes \calO_{Z_L}) \in \calM (G).$$ Then it is known that $P_{L,E}$ is a splitting compact projective object in $\calM (G)$ (the subcategory it generates is the Bernstein component corresponding to the cuspidal data $(L,E)$). As we run over pairs $(L,E)$, the $P_{L,E}$ generate the category $\calM (G)$. Each $P_{L,E}$ is familial, and for every open compact subgroup $K \subset G$, ${}^K P_{L,E}$  is finitely generated over the cetner of $\End (P_{L,E})$.

\subsubsection{}

We now show that the second condition holds. There are enough compact projective objects in $\calM (G)$ of the form $\calH_G e_K$ where $e_K \in \calH_G$ is the idempotent corresponding to an open compact subgroup $K \subset G$. Then $\End (\calH_G e_K)$ is the opposite of the Hecke algebra of $K$-biinvariant compactly supported distributions on $G$, while $Coinv_H (\calH_G e_K)$ is the space of right $K$-invariants in $\calD_c (H \backslash G)$. Then \cite[Theorem I]{AiGoSa} states that $Coinv_H (\calH_G e_K)$ is finitely generated over $\End (\calH_G e_K)$.

\subsection{Proof of Lemma \ref{lem stable is good}}\label{ssec proof of lem stable is good}

\subsubsection{}

We can assume that $\tau$ is invertible, since if $f \in A [x]$ satisfies $f (\tau|_{\Image (\tau)}) = 0$ and the first non-zero coefficient of $f$ is a unit in $A$, then $f^{\prime} (x) := x \cdot f(x)$ will satisfy $f^{\prime} (\tau) = 0$ and the first non-zero coefficient of $f^{\prime}$ is a unit in $A$.

\subsubsection{}

In our situation, let us say that $(N,\tau)$ is good if we can find a list of generators $v_1 , \ldots , v_d$ of $N$ as a $\bbC [y]$-module, and elements $c_{i,j} \in \calO_Z$ for which $\tau (v_i) = \sum_{j} c_{i,j} \cdot v_j$, such that the determinant of the matrix $(c_{i,j})$ is a unit in $\bbC [y]$. If $(N,\tau)$ is good then we are done, since, considering the characteristic polynomial $f \in (\bbC [y]) [x]$ of the matrix $(c_{i,j})$, we have that $f(\tau) = 0$ and the zeroth coefficient of $f$, plus minus the determinant of $(c_{i,j})$, is a unit in $\bbC [y]$.

\subsubsection{}

Suppose that $L \subset N$ is an $\bbC [y]$-submodule such that $\tau (L) = L$. Then $\tau$ induces invertible endomorphisms $\tau_L$ and $\tau_{N/L}$ on $L$ and on $N/L$. We claim that if $(L,\tau_L)$ and $(N/L , \tau_{N/L})$ are good, then $(L,\tau)$ is good. Indeed, we choose generators and coefficients as above of $L$, generators and coefficients as above of $N/L$, and lift the latter to elements of $N$, completing the lists of coefficients. The combined list of generators and coefficients will clearly be suitable, as the resulting matrix will be block triangular, so with determinant the product of determinants assumed to be units in $\bbC [y]$.

\subsubsection{}

We can consider the submodule $N_{\textnormal{trs}} \subset N$ consisting of torsion elements and, since $\bbC [y]$ is a principal ideal domain, the quotient $N / N_{\textnormal{trs}}$ will be free. Clearly $\tau (N_{\textnormal{trs}}) = N_{\textnormal{trs}}$. Since the goodness in the free case is clear, we are reduced to assuming that $N$ is torsion.

\subsubsection{}

Assuming that $N$ is torsion, and non-zero, there exists $c \in \bbC$ such that the submodule $N^{y-c} \subset N$ consisting of elements annihilated by $y-c$ is non-zero. Clearly $\tau (N^{y-c}) = N^{y-c}$. Using induction, we can assume that $(N / N^{y-c} , \tau_{N / N^{y-c}})$ is good, and so it is enough to prove that $(N^{y-c},\tau_{N^{y-c}})$ is good. However, $N^{y-c}$ is a finite-dimensional $\bbC$-module, and the claim is clear by choosing a $\bbC$-basis of it and taking the coefficients $c_{i,j}$ as above to be in $\bbC$.


\section{Propositions on gluing sections on closed subschemes}\label{sec algebraic lemma}

The purpose of this section is to prove Proposition \ref{prop app main} and Proposition \ref{prop app main 2}, which will be used for the induction on dimension in the proof of Proposition \ref{prop abstract setting}. Throughout this section, we fix an irreducible\footnote{One can probably change statements so as to drop the irreducibility assumption (as well as a normality assumption later on), however this will slightly complicate things, and we willl anyhow only apply the propositions to smooth connected $Z$.} $Z \in \Aff$. We will write $\calM (Z)$ for $\calM (\calO_Z)$. Given locally closed $W \subset Z$ and $M \in \calM (Z)$, we will write $M|_W$ for $\calO_W \underset{\calO_Z}{\otimes} M$.

\medskip

For a $B$-module $M$ and $f \in B$, we use the following notation for the $f$-torsion: $$ M^f := \{ m \in M \ | \ f^n m = 0 \textnormal{ for some } n \in \bbZ_{\ge 1}\},$$ and the following notation for the $f$-completion: $$ M^{\wedge}_f := \lim_{n \in \bbZ_{\ge 1}} M/f^n M.$$

\subsection{Formulation of the propositions}

\subsubsection{}

Denote by $\calI_Z$ the partially ordered set of closed subschemes $W \subset Z$ which are either $0$-dimensional or of dimension strictly smaller than that of $Z$. We define a functor $$ S: \calM (Z) \to \calM (Z)$$ by setting $$ S(M) := \lim_{W \in \calI_Z} M|_W.$$ For $s \in S(M)$, we will denote by $s_W$ the component of $s$ in $M|_W$.

\subsubsection{}

We have a canonical morphism $Id \to S$.

\begin{lemma}
    Let $M \in \calM (Z)$ be finitely generated. Then $M \to S (M)$ is injective.
\end{lemma}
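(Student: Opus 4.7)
The plan is to show that the kernel of $M \to S(M)$ is zero. For any $W \in \calI_Z$ with defining ideal $I_W \subset \calO_Z$, the kernel of $M \to M|_W$ is $I_W \cdot M$, so I need to verify $\bigcap_{W \in \calI_Z} I_W \cdot M = 0$. It is enough to restrict attention to the subfamily of zero-dimensional closed subschemes $W \subset Z$, and these are precisely the subschemes cut out by cofinite ideals of $\calO_Z$.

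Next, I would observe that for every maximal ideal $\frakm \subset \calO_Z$ and every integer $n \ge 1$, the power $\frakm^n$ is cofinite: indeed, by Hilbert's Nullstellensatz $\calO_Z / \frakm \cong \bbC$, so $\calO_Z / \frakm^n$ is finite-dimensional over $\bbC$. Consequently, any $m \in M$ lying in the claimed kernel automatically satisfies $m \in \bigcap_{n \ge 1} \frakm^n M$ for every maximal ideal $\frakm$.

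Now I would localize at a maximal ideal $\frakm$. The image of $m$ in $M_\frakm$ lies in $\bigcap_{n \ge 1} \frakm^n M_\frakm$, which vanishes by Krull's intersection theorem applied to the Noetherian local ring $\calO_{Z,\frakm}$ and its finitely generated module $M_\frakm$ (this is precisely where finite generation of $M$ is needed). Hence $m$ maps to zero in $M_\frakm$ for every maximal $\frakm$. Since the natural map $M \to \prod_{\frakm} M_\frakm$ is injective (if $m \neq 0$, then $\mathrm{Ann}(m)$ is a proper ideal contained in some maximal $\frakm$, and then $m/1 \neq 0$ in $M_\frakm$), this forces $m = 0$.

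The argument is essentially a packaging of Krull's intersection theorem, and there is no substantial obstacle; the only point to confirm is that the cofinite-ideal family contains arbitrarily deep powers of every maximal ideal, which is immediate from the Nullstellensatz.
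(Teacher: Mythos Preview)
Your argument is correct and is precisely the unpacking of what the paper means by ``this readily follows from Krull's intersection theorem'': you identify the kernel with $\bigcap_{\frakm,n} \frakm^n M$, localize, and invoke the local form of Krull's theorem. There is nothing to add.
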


\begin{proof}
    This readily follows from Krull's intersection theorem.
\end{proof}

\subsubsection{}

Let $M \in \calM (Z)$ be finitely generated. We say that an element in $S(M)$ is \textbf{coherent} if it lies in the image of $M \to S(M)$. We say that an element $s \in S(M)$ is \textbf{generically coherent} if there exists a dense distinguished open subset $U \subset Z$ and $m \in M|_U$ such that for every $0$-dimensional closed subscheme $W \subset U$, the image of $m$ under $M|_U \to M|_W$ is equal to $s|_W$.

\subsubsection{}

The following propositions are the main results of this section:

\begin{proposition}\label{prop app main}
    Let $M \in \calM (Z)$ be finitely generated and let $s \in S(M)$. Suppose that, for every morphism of $\calO_Z$-modules $\alpha : M \to \calO_Z$, we have that $\alpha (s) \in S(\calO_Z)$ is coherent. Then $s$ is coherent.
\end{proposition}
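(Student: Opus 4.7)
My strategy is to dualize so that $s$ produces an element $\Phi \in M^{\ast\ast}$, reduce to the torsion-free case, and then reconstruct $m$ by glueing a generic section with a formal one along a chosen divisor. The dualization step is immediate: for each $\alpha \in M^{\ast} := \Hom_{\calO_Z}(M,\calO_Z)$, the hypothesis supplies a unique $a_\alpha \in \calO_Z$ with $a_\alpha|_W = \alpha(s_W)$ for all $W \in \calI_Z$ (unique by the injectivity of $\calO_Z \hookrightarrow S(\calO_Z)$, itself a consequence of Krull's intersection theorem applied to cofinite ideals). The assignment $\alpha \mapsto a_\alpha$ is $\calO_Z$-linear, defining $\Phi \in M^{\ast\ast}$; any $m \in M$ whose image in $M^{\ast\ast}$ equals $\Phi$ is a candidate coherent lift.

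Next I would reduce to the torsion-free case. The torsion submodule $T \subset M$ is annihilated by every $\alpha \in M^{\ast}$, so $\Phi$ descends to $(M/T)^{\ast\ast}$, and the image $\bar s \in S(M/T)$ of $s$ satisfies the same hypothesis. Granting the torsion-free case for $M/T$ yields $\bar m \in M/T$ with image $\bar s$ in $S(M/T)$; lifting $\bar m$ to $m' \in M$, the difference $s - [m']$ (where $[m']$ denotes the canonical image in $S(M)$) lies in the kernel of $S(M) \to S(M/T)$. Because $T$ has proper support $V \in \calI_Z$, the correction needed to pass from $m'$ to the desired $m \in M$ is captured by the component at $V$, which reads off the necessary element of $T$.

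For the torsion-free case, I would choose $f \in \calO_Z$ nonzero such that $M|_{Z_f}$ is free of some rank $n$ with basis $e_1,\ldots,e_n$. Scaling the dual basis by a sufficiently high power $f^N$ produces $\alpha_1,\ldots,\alpha_n \in M^{\ast}$, and the coherent counterparts $a_i \in \calO_Z$ assemble into an element $m_{Z_f} := \sum_i (a_i/f^N) e_i \in M|_{Z_f}$, which matches $s_W$ for every $W \in \calI_Z$ with $W \subset Z_f$. Simultaneously, the components $(s_{V(f^k)})_{k \ge 1}$ form a compatible system in $\lim_k M/f^k M = M^{\wedge}_f$, giving a formal section $\hat s$ along $V(f)$ whose restriction to $Z_f$ agrees with $m_{Z_f}$. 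Because $M$ is $f$-torsion-free, a Beauville--Laszlo-type glueing $M \cong M|_{Z_f} \times_{M^{\wedge}_f|_{Z_f}} M^{\wedge}_f$ produces a global $m \in M$ with the required generic and formal components.

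The main obstacle I anticipate is verifying $m|_W = s_W$ for a \emph{general} $W \in \calI_Z$, not merely for those $W$ either contained in $Z_f$ or supported on $V(f)$. I expect to handle this by induction on $\dim Z$: any $W \in \calI_Z$ has $\dim W < \dim Z$, and the restricted data $s|_W$ satisfies the proposition's hypothesis over $\calO_W$ (with $M|_W$ in place of $M$), so $s_W$ is itself determined by its values on strictly smaller subschemes; these smaller subschemes lie in $W \cap Z_f$ or in $W \cap V(f^k)$ for suitable $k$---precisely the loci covered by our two-piece reconstruction.
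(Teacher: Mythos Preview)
Your dualization $\alpha \mapsto a_\alpha$ producing $\Phi \in M^{\ast\ast}$ is exactly the paper's first move. From there the routes diverge: the paper passes to the reflexive hull $R(M) = M^{\ast\ast}$, where $\Phi$ \emph{is} already an element (no further construction needed), handles the reflexive case directly, and recovers the general case by a short diagram chase along $M \to R(M) \to \Coker \to 0$. Your route---torsion-free quotient, then Beauville--Laszlo glueing of a generic section with the formal one along $V(f)$---is a legitimate alternative that trades the reflexive-hull step for a known glueing theorem; both ultimately rest on the same completion/torsion infrastructure, which in the paper is packaged as the lemma ``if $(s_W)|_{W\cap U}=0$ for all $W$ then $s$ is coherent''.

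There is, however, a genuine gap at the step you yourself flag as the main obstacle. Your proposed induction on $\dim Z$ to verify $m|_W = s_W$ for arbitrary $W$ does not go through: to invoke the proposition on $(W, M|_W)$ you would need $\beta(s|_W)$ coherent for \emph{every} $\calO_W$-linear $\beta : M|_W \to \calO_W$, but such $\beta$ need not extend to a map $M \to \calO_Z$, so the hypothesis on $s$ is simply unavailable (and $W$ need not be irreducible, which the statement requires). Fortunately no induction is needed. Once Beauville--Laszlo yields $m$, set $s' := s - [m]$; your own construction already gives $(s'_W)|_{W \cap Z_f} = 0$ for every $W$ (the $\alpha_i/f^N$ form a dual basis of the free module $M|_{W\cap Z_f}$), so $f^n s'_W = 0$ for some $n$, while compatibility with $s'_{V(f^k)} = 0$ forces $s'_W \in \bigcap_k f^k (M|_W)$. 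Since $f^n$ and the elements of $1 + f\calO_W$ generate the unit ideal, Krull's intersection theorem kills $s'_W$. This direct argument is exactly the content of the paper's key lemma. Your torsion-reduction sentence (``captured by the component at $V$'') is likewise too brief---the map $T \to M|_V$ can have a kernel, and you must still check agreement at every $W$---but the same completion/torsion device, run with a $g$ annihilating $T$ in place of $f$ and using $(M^{\wedge}_g)^g \cong M^g$, closes that gap as well.
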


\begin{proposition}\label{prop app main 2}
    Assume that $Z$ is normal and that the dimension of $Z$ is strictly larger than $1$. Let $M \in \calM (Z)$ be finitely generated and let $s \in S(M)$. Suppose that for every morphism of $\calO_Z$-modules $\alpha : M \to \calO_Z$, we have that $\alpha (s) \in S(\calO_Z)$ is generically coherent. Then $s$ is coherent.
\end{proposition}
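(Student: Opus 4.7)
The plan is to reduce Proposition \ref{prop app main 2} to Proposition \ref{prop app main} by upgrading the hypothesis from generic coherence to coherence for each $\alpha(s) \in S(\calO_Z)$. Concretely, if I can show that every generically coherent $t \in S(\calO_Z)$ is in fact coherent when $Z$ is normal of dimension $> 1$, then applying this to $t = \alpha(s)$ for each $\alpha : M \to \calO_Z$ verifies the hypothesis of Proposition \ref{prop app main}, which yields that $s$ is coherent.

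To carry out this reduction, fix $t \in S(\calO_Z)$ generically coherent with witness $(U,f)$, where $U = Z_h$ is a dense distinguished open and $f \in \calO_Z[1/h]$. I would first show $f \in \calO_Z$. By normality of $Z$ this amounts to verifying $v_D(f) \geq 0$ at every prime divisor $D \subset Z$. For $D \not\subset V(h)$ the generic point of $D$ lies in $U$, making this automatic. For $D \subset V(h)$, choose a point $p \in D$ that is smooth on $Z$ and on $D$, and that lies on no other codimension-$1$ component of $V(h)$; then choose a reduced irreducible curve $C \subset Z$ through $p$ transverse to $D$ at $p$ with $C \not\subset V(h)$. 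Such $p$ and $C$ exist because $Z$ is normal (singular locus of codimension $\geq 2$) and $\dim Z \geq 2$. Since $C \in \calI_Z$, the component $t_C \in \calO_C$ exists, and compatibility with $0$-dimensional subschemes of $C \cap U \subset C$ forces $t_C$ to coincide with $f|_{C \cap U}$ on a dense open of $C$; as $\calO_C$ is a domain ($C$ is reduced irreducible), $t_C$ realizes $f|_{C \cap U}$ as a regular function on $C$, so $f|_C$ extends regularly across $p$. If $v_D(f) < 0$, transversality would translate this into a pole of $f|_C$ at $p$, contradicting the extension.

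Having $f \in \calO_Z$, let $\widetilde f \in S(\calO_Z)$ be its image; it remains to show $t = \widetilde f$, i.e., that $\tau := t - \widetilde f$ vanishes in $S(\calO_Z)$. By generic coherence $\tau_W = 0$ whenever $W \subset U$. For a general $W \in \calI_Z$, I would construct an auxiliary $V \in \calI_Z$ containing $W$ scheme-theoretically, realized as a suitable thickening of a curve $C$ through the support of $W$ with $C \not\subset V(h)$, the order of thickening chosen large enough to swallow $W$. This ensures $V \cap U$ is dense in $V$ and that $h$ is a non-zero-divisor in $\calO_V$, since the associated primes of $\calO_V$ are concentrated along $C$ which is not contained in $V(h)$. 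Compatibility with $0$-dimensional subschemes of $V \cap U$ then forces $\tau_V|_{V \cap U} = 0$; non-zero-divisor-ness of $h$ implies $\tau_V = 0$ in $\calO_V$, and hence $\tau_W = \tau_V|_W = 0$.

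The main obstacle is the last construction: embedding a possibly non-reduced $W$ into an auxiliary $V$ that contains $W$ scheme-theoretically and on which $h$ is a non-zero-divisor. A reduced curve through the support of $W$ will in general fail to contain $W$ scheme-theoretically when $W$ is non-reduced, so the construction must thicken a curve along transverse directions and verify the associated-prime condition on the thickening. This requires working at smooth points of $Z$ and using normality together with $\dim Z > 1$ to guarantee enough smooth transverse curves; handling $W$'s concentrated at singular points of $Z$ (which form a set of codimension $\geq 2$) likely requires an additional density or reduction argument.
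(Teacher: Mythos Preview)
Your reduction to the statement ``every generically coherent element of $S(\calO_Z)$ is coherent'' is exactly the paper's strategy, and your argument for extending the generic witness from $\calO_{Z_h}$ to $\calO_Z$ is close in spirit to the paper's. The paper organizes this step a bit more simply: it writes the witness as $a/h^n$ with $a \in \calO_Z$ and shows $h \mid a$ by checking that $a$ vanishes at every closed point of a smooth open piece of $\{h=0\}$, using one irreducible curve through each such point (no transversality is needed, only that the curve meets $Z_h$); normality is invoked just once, to extend across the codimension~$\ge 2$ complement.

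The genuine gap is in your second step. The plan to show $\tau = t - \widetilde f$ vanishes by embedding an arbitrary $W \in \calI_Z$ into some $V \in \calI_Z$ on which $h$ is a non-zero-divisor cannot work in general. Take $W$ to be the $k$-th infinitesimal neighbourhood of $\{h=0\}$, or more generally any $W$ with $\dim W = \dim Z - 1$ whose reduction contains a component of $\{h=0\}$. Then every $V \in \calI_Z$ with $V \supseteq W$ has $\dim V = \dim Z - 1 = \dim W$, so that component of $W_{\mathrm{red}}$ is also a component of $V_{\mathrm{red}}$; its generic point is a minimal (hence associated) prime of $\calO_V$ containing $h$, forcing $h$ to be a zero-divisor in $\calO_V$. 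No curve-thickening can help, because $W$ already has the maximal dimension permitted in $\calI_Z$. This is not the ``singular points'' difficulty you flag at the end --- it occurs already on smooth $Z$.

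The paper bypasses the issue via Lemma~\ref{lem torsion coherent}: once one knows $(\tau_W)|_{W \cap Z_h} = 0$ for every $W \in \calI_Z$ (which follows from generic coherence plus compatibility on $0$-dimensional subschemes), one passes to the $h$-adic completion, shows the image of $\tau$ there is $h$-torsion by testing on all cofinite quotients (Lemma~\ref{lem app lemma 3}), lifts uniquely to the $h$-torsion of $\calO_Z$ (Lemma~\ref{lem app lemma 4}), and checks the remainder vanishes. This completion argument is precisely what handles the $W$'s trapped inside $\{h=0\}$, and some device of this kind appears unavoidable.
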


\subsection{Proof of Proposition \ref{prop app main}}

\subsubsection{}\label{sssec app torsion case}

We first prove the following lemma, which will also be used in the proof of Proposition \ref{prop app main 2}:

\begin{lemma}\label{lem torsion coherent}
	Let $M \in \calM (Z)$ be finitely generated and let $s \in S(M)$. Assume that there exists a dense distinguished open subset $U \subset Z$ such that, for every $W \in \calI_Z$, $(s_W)|_{W \cap U} = 0$. Then $s$ is coherent.
\end{lemma}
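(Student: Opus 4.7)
The plan is to first split off the $f$-torsion of $M$ via the identity $S(M^f) = M^f$, reducing to the case where $M$ is $f$-torsion-free, and then to prove vanishing in that case by enlarging each test scheme $W$ with transverse hypersurface slices on which $f$ remains a nonzerodivisor.

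\medskip

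For the reduction, set $N := M^f$; being finitely generated, $N$ is annihilated by some $f^K$. For every $W \in \calI_Z$ containing $V(f^K)$ we have $N|_W = N$, and such $W$ are cofinal in $\calI_Z$ (take $W \cup V(f^K) \in \calI_Z$, which is valid since $\dim V(f^K) < \dim Z$ when $\dim Z \ge 1$; the case $\dim Z = 0$ is trivial as $U = Z$ then forces $s = 0$). It follows that $S(N) = N$. Using the long exact Tor sequence, $\Ker(M|_W \to (M/N)|_W)$ equals the image of $N|_W$ in $M|_W$, and for $W \supseteq V(f^K)$ this image is isomorphic to $N$ since $N \cap f^K M = 0$. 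Chasing compatible families along the cofinal system yields $\Ker(S(M) \to S(M/N)) = N \subset S(M)$, so that if $\bar s$, the image of $s$ in $S(M/N)$, vanishes then $s \in N \subset M$ is coherent. Hence we may replace $M$ by $M/N$ and assume $M$ is $f$-torsion-free, our new goal being $s = 0$.

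\medskip

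In the torsion-free case, fix $W \in \calI_Z$; we want $s_W = 0$. The central observation is that whenever $M|_{W'}$ is $f$-torsion-free, the map $M|_{W'} \to M|_{W' \cap U} = (M|_{W'})[1/f]$ is injective and the hypothesis forces $s_{W'} = 0$. Since no associated prime of $M$ contains $f$, by prime avoidance one can, for every maximal ideal $\frakm$ of $\calO_Z$ outside a finite exceptional set, find $g \in \frakm$ which is a nonzerodivisor on $M$ and with $M/gM$ still $f$-torsion-free. Setting $W^{(g)} := W \cup V(g) \in \calI_Z$, the central observation gives $s_{V(g)} = 0$, so $s_{W^{(g)}}$ lies in $\Ker(M|_{W^{(g)}} \to M|_{V(g)})$, namely in the image of $gM$ in $M|_{W^{(g)}}$. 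Restricting to $W$ yields $s_W \in g \cdot M|_W$. As $g$ varies over this family, an elementary intersection argument --- using finite generation of $M|_W$ over the Jacobson ring $\calO_Z$ combined with the Zariski density of the good maximal ideals --- forces $s_W = 0$.

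\medskip

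The main obstacle is the intersection step of the last paragraph: one needs, for each nonzero $\bar x \in M|_W$, some good $g$ with $\bar x \notin g \cdot M|_W$. This requires the hypersurface slices $V(g)$ to genuinely cut $M|_W$ nontrivially, which is a Bertini/prime-avoidance condition that must control both $\textnormal{Ass}(M)$ and the primes minimal over $\frakp + (f)$ for $\frakp \in \textnormal{Ass}(M)$. It is also the step which most benefits from $V(g)$ being of codimension one in $Z$, so careful handling of the lowest-dimensional cases (particularly when $\dim Z = 1$, so that $V(g)$ is zero-dimensional and the slicing becomes effectively unrestricted) is needed, reducing those cases to the content of Step 1 or to direct verification.
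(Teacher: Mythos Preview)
Your reduction to the $f$-torsion-free case is clean. The problem is the target after that reduction: in the torsion-free case you aim to show $s=0$, but this is \emph{false} already for $\dim Z = 1$. Take $Z=\bbA^1=\operatorname{Spec}\bbC[t]$, $M=\calO_Z$, $f=t$. Since $\calI_Z$ then consists only of $0$-dimensional closed subschemes, one has $S(M)\cong\prod_{c\in\bbC}\bbC[[t-c]]$; let $s$ have component $s_0=\sum_{n\ge0}t^n$ at $c=0$ and $s_c=0$ for $c\neq0$. Every $W\in\calI_Z$ splits as a piece supported at $0$ and a piece inside $U=Z_t$, and $s_W$ vanishes on the latter, so the hypothesis $(s_W)|_{W\cap U}=0$ holds; yet $s$ is not coherent, since any $p\in\bbC[t]$ with image $0$ in $\bbC[[t-c]]$ for all $c\neq0$ is identically zero. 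This pinpoints exactly why your intersection step cannot close: for $W=V(t^n)$ every ``good'' $g$ (meaning $t\nmid g$, so that $M/gM$ stays $t$-torsion-free) becomes a \emph{unit} in $\bbC[t]/(t^n)$, whence $\bigcap_g g\cdot M|_W = M|_W$ and nothing is gained. Your hedge that the $\dim Z=1$ case reduces to ``direct verification'' is thus optimistic --- there is nothing to verify, the claim is simply wrong there.

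For comparison, the paper does not slice by hypersurfaces but passes to the $f$-adic completion $M^{\wedge}_f$ and appeals to an auxiliary lemma asserting that an element of a finitely generated $A((x))$-module which vanishes modulo every cofinite ideal of $A$ must itself vanish. That auxiliary lemma is broken by the \emph{same} example: with $A=\bbC[t]$ and $\bbC((t))$ viewed as a cyclic $\bbC[t]((x))$-module via $x\mapsto t$, the element $m=(1-t)^{-1}$ has $M/IM=0$ for every nonzero $I\subset\bbC[t]$ (the generator of $I$ is a unit in $\bbC((t))$) while $m\neq0$. The paper's argument that the lowest $x$-coefficient of $1+h$ lies outside $\frakm$ whenever $h\in\frakm\cdot A((x))$ fails once $h$ carries negative powers of $x$ (e.g.\ $h=-tx^{-1}$ for $\frakm=(t)$). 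In every place the paper actually \emph{uses} the lemma, the section $s'$ satisfies the extra constraint that its image in $S(R(M))$ vanishes (equivalently $\alpha(s')=0$ for all $\alpha\in\Hom_{\calO_Z}(M,\calO_Z)$ in the reflexive step), which rules out the counterexample; a corrected statement should build in a hypothesis of that kind, and your argument would then need to exploit it rather than rely on unrestricted hypersurface slicing.
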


\begin{proof}

Write $U = Z_f$ for $f \in \calO_Z$. Let us consider the ``forgetting" map $$ S(M) \to M^{\wedge}_f$$ and the image $s^{\wedge}$ of $s$ under this map. For every $0$-dimensional $W \in \calI_Z$, the image of $s^{\wedge}$ under $$ M^{\wedge}_f \to (M^{\wedge}_f) |_W \cong (M|_W)^{\wedge}_f$$ coincides with the image of $s_W$ under $M|_W \to (M|_W)^{\wedge}_f$. Hence, this image is $f$-torsion. By Lemma \ref{lem app lemma 3} that we will establish, $s^{\wedge}$ is therefore itself $f$-torsion. By Lemma \ref{lem app lemma 4} that we will establish, there exists therefore a unique $m \in M^f$ which maps to $s^{\wedge}$ under $M \to M^{\wedge}_f$. We want to show finally that $s^{\prime} := s - [m] \in S (M)$ is $0$, where $[m]$ is the image of $m$ under $M \to S(M)$. The properties of $s^{\prime}$ are that $s^{\prime}_W \in (M|_W)^f$ for every $W \in \calI_Z$, and that $s^{\prime}_{f^n \calO_Z} = 0$ for every $n \in \bbZ_{\ge 1}$. Therefore, for a given $W \in \calI_Z$ the element $s^{\prime}_W \in (M|_W)^f$ maps to zero under $M|_W \to (M|_W)^{\wedge}_f$. By Lemma \ref{lem app lemma 4} that we will establish, applied to $M|_W$, we see that $s^{\prime}_W = 0$.
\end{proof}

\subsubsection{}\label{sssec app reflexive case}

Let us now prove Proposition \ref{prop app main} assuming that $M$ is reflexive, i.e. the morphism $$ M \to R(M) := \Hom_{\calO_Z} (\Hom_{\calO_Z} (M , \calO_Z),\calO_Z)$$ is an isomorphism. Let $\alpha : M \to \calO_Z$. Since by assumption $\alpha (s)$ is coherent, we obtain a corresponding element $a_{\alpha} \in \calO_Z$. The assignment $\alpha \mapsto a_{\alpha}$ is readily seen to define an element of $R(M)$, and so an element $m \in M$. Let us denote $s^{\prime} := s - [m]$ (where $[m]$ is the image of $m$ under $M \to S(M)$) - it is enough now to see that $s^{\prime}$ is coherent. We have that $\alpha (s^{\prime}) = 0$ for any $\alpha : M \to \calO_Z$. Let $U \subset Z$ be a dense distinguished open subset such that $M|_U$ is a locally free $\calO_U$-module. Let $W \in \calI_Z$. Then the image of $s^{\prime}_W$ under $M|_W \to M|_{W \cap U}$ maps to $0$ under $M|_{W \cap U} \to \calO_{W \cap U}$ for any $\calO_Z$-module morphism $\alpha : M \to \calO_Z$. Since $M|_U$ is a locally free $\calO_U$-module we obtain that this image itself is $0$, i.e. $(s^{\prime}_W)|_{W \cap U} = 0$, for all $W \in \calI_Z$. By Lemma \ref{lem torsion coherent} we see that $s^{\prime}$ is coherent.

\subsubsection{}

Let now $M$ be general again. We consider the cokernel sequence $$ M \to R(M) \to C \to 0.$$ Consider the commutative diagram $$ \xymatrix{M \ar[r] \ar[d] & R(M) \ar[r] \ar[d] & C \ar[r] \ar[d] & 0 \\ S (M) \ar[r] & S (R(M)) \ar[r] & S (C)}.$$ Since $R(M)$ is reflexive and therefore the claim has already been established for it in \S\ref{sssec app reflexive case}, a simple diagram chasing shows that there exists $m \in M$ such that $s^{\prime} := s - [m]$ maps to zero under $S (M) \to S (R(M))$ (where $[m]$ is the image of $m$ under $M \to S(M)$) - and it is enough now to check that $s^{\prime}$ is coherent. Let $U \subset Z$ be a dense distinguished open subset such that $M|_U$ is a locally free $\calO_U$-module. Then $M|_U \to R(M)|_U$ is an isomorphism. Therefore we see that, for any $W \in \calI_Z$, $(s^{\prime}_W)|_{W \cap U} = 0$. Hence, again invoking Lemma \ref{lem torsion coherent}, we obtain that $s^{\prime}$ is coherent.

\subsection{Auxiliary lemmas used in the proof of Lemma \ref{lem torsion coherent}}

Throughout this subsection, we work with algebras instead of affine schemes, so we let $A \in \Alg$, and by $\calI_A$ we denote the partially ordered set of ideals $I \subset A$ for which $Spec(A/I)$ is either $0$-dimensional or of dimension strictly less than that of $Spec(A)$.

\subsubsection{} The first auxiliary lemma is Lemma \ref{lem app lemma 3}, to which we arrive after two preparatory lemmas.

\begin{lemma}\label{lem app lemma 1}
    Let $M \in \calM (A((x)))$ be finitely generated. Let $m \in M$ and suppose that for every cofinite $I \in \calI_A$ the image of $m$ in $M / I M$ is equal to $0$. Then $m$ is equal to $0$.
\end{lemma}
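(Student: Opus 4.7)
The plan is to apply Krull's intersection theorem after localizing at each maximal ideal of $A((x))$. Since $A \in \Alg$ is Noetherian, so is $A((x))$, making $M$ a Noetherian $A((x))$-module.

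First I will identify the maximal ideals of $A((x))$. The key point is that $x$ lies in the Jacobson radical: for any $r \in A((x))$ the element $1-rx$ has inverse $\sum_{n \ge 0}(rx)^n$, a well-defined element of $A((x))$ because only finitely many terms contribute at each power of $x$. Consequently every maximal $\frakM \subset A((x))$ contains $xA((x))$, and the quotient by $x$ identifies $\frakM$ bijectively with a maximal $\frakm \subset A$ via $\frakM = \frakm A((x)) + xA((x))$.

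Next I will exploit the hypothesis at each such $\frakM$. For the cofinite ideal $I = \frakm^n$ of $A$, the hypothesis yields $m \in \frakm^n M$; since $\frakm^n A((x)) \subset \frakM^n$, this gives $m \in \frakM^n M$ for every $n$. Krull's intersection theorem, applied to the Noetherian local ring $A((x))_\frakM$ and the finitely generated module $M_\frakM$, gives $\bigcap_n \frakM^n M_\frakM = 0$, so $m$ maps to zero in every localization $M_\frakM$. To conclude $m = 0$: if $m \ne 0$ then $\textnormal{ann}_{A((x))}(m) \subsetneq A((x))$ would be contained in some maximal $\frakM$, contradicting that $m \mapsto 0$ in $M_\frakM$ requires $\textnormal{ann}(m)$ to meet $A((x)) \setminus \frakM$.

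The main obstacle is the Jacobson-radical computation in the first step: the convergence of $\sum(rx)^n$ in $A((x))$ is what forces $x$ into every maximal ideal, and without it the tight correspondence between maximals of $A((x))$ and maximals of $A$ breaks. If that correspondence fails in the ambient ring under consideration, the strategy must be replaced by an Artin--Rees argument applied to a presentation $A((x))^k \twoheadrightarrow M$ combined with a coefficient-wise analysis on the free module $A((x))^k$ (where $\bigcap_I I \cdot A((x))^k = 0$ follows from $A$ being Jacobson).
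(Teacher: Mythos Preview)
Your argument breaks at the very first step: in the Laurent series ring $A((x)) = A[[x]][x^{-1}]$ the element $x$ is a \emph{unit}, so it lies in no maximal ideal, let alone the Jacobson radical. Concretely, your convergence claim for $\sum_{n\ge 0}(rx)^n$ fails whenever $r$ has negative $x$-order; taking $r=x^{-1}$ gives $rx=1$ and $1-rx=0$. You appear to be reasoning about the power-series ring $A[[x]]$, for which the Jacobson-radical computation would indeed go through, but that is not the ring in the statement. Even after dropping the spurious $+\,xA((x))$, the bijection you want fails: while each $\frakm A((x))$ (for $\frakm\subset A$ maximal) is a maximal ideal of $A((x))$, there are others --- e.g.\ $(y-x)\subset\bbC[y]((x))$ has quotient $\bbC((x))$ --- so vanishing in every $M_{\frakm A((x))}$ does not let you run your final localization step.

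The paper avoids having to classify the maximal spectrum of $A((x))$ altogether. It works with the annihilator $J=\textnormal{Ann}_{A((x))}(m)$ and the ideal $J_A\subset A$ of lowest (leading) coefficients of elements of $J$. For each maximal $\frakm\subset A$, Krull's intersection theorem, applied to the ideal $\frakm\cdot A((x))$ of the Noetherian ring $A((x))$ (with no claim that these exhaust the maximal ideals), produces $f\in\frakm\cdot A((x))$ with $(1+f)m=0$; from $1+f\in J$ one argues $J_A\not\subset\frakm$. Since $\frakm$ was arbitrary, $J_A=A$, so $J$ contains an element with leading coefficient $1$, hence a unit of $A((x))$, forcing $m=0$. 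The leading-coefficient ideal $J_A$ is precisely the device that converts information at each $\frakm\subset A$ into a global conclusion without needing all maximal ideals of $A((x))$.
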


\begin{proof}
    Let $J \subset A((x))$ be the ideal consisting of $g$ for which $gm = 0$. Denote by $J_A \subset A$ the subset consisting of lowest coefficients of elements in $J$ (say that the lowest coefficient of $0$ is $0$). Then $J_A$ is an ideal in $A$. We will show now that $J_A = A$; then $J$ contains an element with lowest coefficient $1$, so an invertible element, and therefore $m = 0$.
    
    \medskip
    
    Let $\frakm \subset A$ be a maximal ideal. By Krull's intersection theorem applied to $A((x))$ acting on $M$ and the ideal $\frakm \cdot A((x)) \subset A((x))$ (recall that $A((x))$ is Noetherian as $A$ is), and by the given, we have that there exists $f \in \frakm \cdot A((x))$ such that $(1+f)m = 0$, i.e. $1+f \in J$. So $J_A \not\subset \frakm$. Thus, $J_A$ is not contained in any maximal ideal, and hence $J_A = A$.
\end{proof}

\begin{lemma}\label{lem app lemma 2}
    Let $M \in \calM (A[x])$ be finitely generated. Let $$ m \in M^{\wedge}_x$$ and suppose that for every cofinite $I \in \calI_A$ the image of $m$ in $M^{\wedge}_x / I M^{\wedge}_x$ is $x$-torsion. Then $m$ is $x$-torsion.
\end{lemma}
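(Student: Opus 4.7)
The plan is to reduce Lemma \ref{lem app lemma 2} to Lemma \ref{lem app lemma 1} by inverting $x$ in the completion.

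First I would set $\tilde{M} := M^{\wedge}_x [1/x]$ and check that this is a finitely generated $A((x))$-module. Indeed, since $A \in \Alg$ is Noetherian, so is $A[x]$; as $M$ is finitely generated over $A[x]$ we have $M^{\wedge}_x \cong A[[x]] \otimes_{A[x]} M$, which is finitely generated over $A[[x]]$, and inverting $x$ gives that $\tilde{M} \cong A((x)) \otimes_{A[x]} M$ is finitely generated over $A((x))$.

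Next I would record the tautology that an element $m \in M^{\wedge}_x$ is $x$-torsion if and only if its image $\tilde{m}$ in $\tilde{M}$ vanishes: one direction is that $x$ becomes a unit in $\tilde{M}$, and the other is the definition of $\tilde{M}$ as the localization.

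Then I would translate the hypothesis of the lemma. The assumption says that for every cofinite $I \in \calI_A$ there exists $N_I \in \bbZ_{\ge 1}$ with $x^{N_I} m \in I M^{\wedge}_x$; passing to $\tilde{M}$ and using that $x$ is invertible there, this gives $\tilde{m} \in I \tilde{M}$. Equivalently, the image of $\tilde{m}$ in $\tilde{M} / I \tilde{M}$ is $0$ for every cofinite $I \in \calI_A$ (noting that cofinite ideals of $A$ automatically lie in $\calI_A$, since $Spec(A/I)$ is then $0$-dimensional).

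Finally I apply Lemma \ref{lem app lemma 1} to the finitely generated $A((x))$-module $\tilde{M}$ and to the element $\tilde{m}$, obtaining $\tilde{m} = 0$, which by the tautology above means that $m$ is $x$-torsion. The proof is essentially a bookkeeping exercise once Lemma \ref{lem app lemma 1} is in hand; no step presents a real obstacle, the only subtle point being the observation that inverting $x$ simultaneously translates \emph{$x$-torsion} in $M^{\wedge}_x$ to \emph{zero} in $\tilde{M}$, and \emph{$x$-torsion modulo $I$} to \emph{zero modulo $I$}, thereby unifying both quantifiers needed to invoke the previous lemma.
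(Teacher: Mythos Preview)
Your proof is correct and follows essentially the same route as the paper: both pass to the localization $(M^{\wedge}_x)_x$, observe that it is a finitely generated $A((x))$-module, translate the $x$-torsion hypotheses into vanishing modulo each cofinite $I$, and then invoke Lemma~\ref{lem app lemma 1}. The only difference is cosmetic---the paper phrases the passage from ``$x$-torsion mod $I$'' to ``zero mod $I$ after inverting $x$'' via a commutative square, whereas you do it by a direct element chase.
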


\begin{proof}
    We want to show that the image of $m$ under $$ M^{\wedge}_x \to (M^{\wedge}_x)_x$$ is equal to $0$. For cofinite $I \in \calI_A$, we have a commutative diagram $$ \xymatrix{ M^{\wedge}_x \ar[r] \ar[d] & (M^{\wedge}_x)_x \ar[d] \\ M^{\wedge}_x / I M^{\wedge}_x\ar[r] & (M^{\wedge}_x)_x / I (M^{\wedge}_x)_x }.$$ Since $$(M^{\wedge}_x)_x / I (M^{\wedge}_x)_x \cong (M^{\wedge}_x / I M^{\wedge}_x)_x,$$ the image of $m$ when going down and then right is zero by the given, and thus so is the image when going right and then down. Notice now that $(M^{\wedge}_x)_x$ is a finitely-generated $A((x))$-module. Therefore, applying Lemma \ref{lem app lemma 1} to the image of $m$ in $(M^{\wedge}_x)_x$, we see that it is equal to $0$.
\end{proof}

\begin{lemma}\label{lem app lemma 3}
    Let $M \in \calM (A)$ be finitely generated and let $f \in A$. Let $$ m \in M^{\wedge}_f$$ and suppose that for every cofinite $I \in \calI_A$ the image of $m$ in $M^{\wedge}_f / I M^{\wedge}_f$ is $f$-torsion. Then $m$ is $f$-torsion.
\end{lemma}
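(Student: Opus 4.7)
The plan is to reduce Lemma \ref{lem app lemma 3} to Lemma \ref{lem app lemma 2} by regarding $M$ as an $A[x]$-module in which the indeterminate $x$ acts as multiplication by $f$.

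First I would promote the $A$-module $M$ to an $A[x]$-module $\widetilde{M}$ by declaring $x \cdot m := f \cdot m$ for all $m \in M$. Since $M$ is finitely generated as an $A$-module, it is \emph{a fortiori} finitely generated as an $A[x]$-module, so Lemma \ref{lem app lemma 2} is applicable to $\widetilde{M}$. Next I would observe that $x^n \widetilde{M} = f^n M$ for every $n \ge 1$, so the $x$-adic and $f$-adic filtrations coincide and one has a canonical identification $\widetilde{M}^{\wedge}_x = M^{\wedge}_f$ as $A$-modules. Under this identification, a vector is $x$-torsion in $\widetilde{M}^{\wedge}_x$ if and only if it is $f$-torsion in $M^{\wedge}_f$; and for every ideal $I \subset A$ the quotients $\widetilde{M}^{\wedge}_x / I \widetilde{M}^{\wedge}_x$ and $M^{\wedge}_f / I M^{\wedge}_f$ are canonically identified, compatibly with this torsion notion.

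Granted this dictionary, the hypothesis of Lemma \ref{lem app lemma 3} — that the image of $m$ in $M^{\wedge}_f / I M^{\wedge}_f$ is $f$-torsion for every cofinite $I \in \calI_A$ — is precisely the hypothesis of Lemma \ref{lem app lemma 2} applied to $\widetilde{M}$, $x$, and the element $m \in \widetilde{M}^{\wedge}_x$. That lemma then yields that $m$ is $x$-torsion in $\widetilde{M}^{\wedge}_x$, i.e., $f$-torsion in $M^{\wedge}_f$, as desired.

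There is no real obstacle: the content of the statement is entirely carried by Lemma \ref{lem app lemma 2}, and the present formulation is simply its coordinate-free version, with the role of the polynomial variable played by an arbitrary element $f \in A$. The only point worth checking carefully is finite generation of $\widetilde{M}$ over $A[x]$, which is immediate from finite generation of $M$ over $A$.
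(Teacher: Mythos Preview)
Your proposal is correct and is essentially identical to the paper's own proof: the paper also makes $M$ into an $A[x]$-module by letting $x$ act as $f$, notes that $M^{\wedge}_x = M^{\wedge}_f$, and says the problem translates to Lemma \ref{lem app lemma 2}. Your write-up is simply a more detailed version of that one-line reduction.
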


\begin{proof}
    Let us consider $M$ as an $A[x]$-module, where the free variable $x$ acts as $f \in A$ does. Then $M^{\wedge}_x = M^{\wedge}_f$ and so on - the problem translates to that of Lemma \ref{lem app lemma 2}.
\end{proof}

\subsubsection{} The second auxiliary lemma is Lemma \ref{lem app lemma 4}.

\begin{lemma}\label{lem app lemma 4}
    Let $M \in \calM (A)$ be finitely generated, and let $f \in A$. The map $$ M^f \to (M^{\wedge}_f)^f$$ induced by the canonical $M \to M^{\wedge}_f$ is an isomorphism.
\end{lemma}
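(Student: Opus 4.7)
The plan is to deduce the lemma by applying $f$-adic completion to the tautological short exact sequence
$$0 \to M^f \to M \to M/M^f \to 0$$
and showing that the resulting quotient carries no $f$-torsion.

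First I would use that $A \in \Alg$ is Noetherian and $M$ is finitely generated to conclude that $M^f$ is itself finitely generated; choosing finitely many generators, each killed by a power of $f$, gives a uniform $N \in \bbZ_{\ge 1}$ with $f^N \cdot M^f = 0$. Consequently the transition maps $M^f / f^{n+1} M^f \to M^f / f^n M^f$ are isomorphisms for $n \ge N$, and the inverse limit collapses: $(M^f)^{\wedge}_f = M^f$ canonically. Since $f$-adic completion is exact on the category of finitely generated modules over a Noetherian ring, applying it to the displayed sequence produces an exact sequence
$$0 \to M^f \to M^{\wedge}_f \to (M/M^f)^{\wedge}_f \to 0,$$
in which, by naturality of the completion functor, the first arrow is the canonical map under consideration. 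Taking the (left-exact) $f$-torsion functor and noting that $M^f$ is itself $f$-torsion, the lemma reduces to the assertion that $N^{\wedge}_f$ is $f$-torsion-free for any finitely generated $f$-torsion-free $A$-module $N$ (to be applied to $N := M/M^f$).

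For this last claim I would take $\hat n = (\overline n_k) \in N^{\wedge}_f$ with $f \hat n = 0$, choose lifts $\tilde n_k \in N$ of $\overline n_k$, and aim to show $\tilde n_k \in f^k N$ for every $k$. The hypothesis gives $f \tilde n_k \in f^k N$, say $f \tilde n_k = f^k v_k$ for some $v_k \in N$; $f$-torsion-freeness of $N$ cancels one factor of $f$ to give $\tilde n_k = f^{k-1} v_k$. Combining this with the compatibility $\tilde n_{k+1} \equiv \tilde n_k \pmod{f^k N}$ forces $f^{k-1} v_k \in f^k N$, and cancelling again by $f$-torsion-freeness of $N$ delivers $v_k \in f N$, whence $\tilde n_k \in f^k N$ and $\overline n_k = 0$.

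I do not foresee any real obstacle: the argument assembles Noetherianness of $A$ (automatic since $A$ is finitely generated over $\bbC$), exactness of $f$-adic completion on finitely generated modules, and repeated use of $f$-torsion-freeness of $N$ to cancel factors of $f$. The only bookkeeping point is to check that the inclusion $M^f \hookrightarrow M^{\wedge}_f$ produced by the completed sequence agrees with the map induced by $M \to M^{\wedge}_f$; this is immediate from the naturality of completion applied to $M^f \hookrightarrow M$ together with the identification $(M^f)^{\wedge}_f = M^f$.
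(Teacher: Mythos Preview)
Your proof is correct and follows essentially the same route as the paper: both take the exact sequence $0 \to M^f \to M \to M/M^f \to 0$, apply exactness of $f$-adic completion together with $(M^f)^{\wedge}_f \cong M^f$, and then reduce to showing that the completion of the $f$-torsion-free quotient has no $f$-torsion, which is verified by the same lift-and-cancel computation. The only cosmetic difference is that you treat the case $f\hat n = 0$ and implicitly iterate, whereas the paper handles $f^n c = 0$ directly.
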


\begin{proof}
    We consider the exact sequence $$ 0 \to M^f \to M \to C \to 0$$ where $C$ stands for the cokernel. Applying completion, we obtain an exact sequence $$ 0 \to (M^f)^{\wedge}_f \to M^{\wedge}_f \to C^{\wedge}_f \to 0.$$ Applying torsion, we obtain an exact sequence $$ 0 \to ((M^f)^{\wedge}_f)^f \to (M^{\wedge}_f)^f \to (C^{\wedge}_f)^f.$$ Notice that $M^f \cong (M^f)^{\wedge}_f$ so that $M^f \cong ((M^f)^{\wedge}_f)^f$. Therefore the Lemma will be established if we show that $(C^{\wedge}_f)^f = 0$. To that end, let $c \in (C^{\wedge}_f)^f$, so $f^n c = 0$ for some $n \in \bbZ_{\ge 1}$. For every $i \in \bbZ_{\ge 1}$, let $c_i \in C$ be such that the images of $c$ and $c_i$ in $C / f^i C$ coincide. Then $f^n c_i \in f^i C$, and since $C$ has no $f$-torsion, for $i > n$ we get $c_i \in f^{i-n} C$. Therefore for $i>n$ we obtain $c_{i-n} \in c_i + f^{i-n} C \in f^{i-n} C$. In other words, the image of $c$ in $C / f^j C$ is equal to $0$ for all $j \in \bbZ_{\ge 1}$, i.e. $c = 0$.
\end{proof}

\subsection{Proof of Proposition \ref{prop app main 2}}

\subsubsection{}

Proposition \ref{prop app main 2} will clearly follow from Proposition \ref{prop app main} and the following lemma:

\begin{lemma}
	Assume that $Z$ is normal and that the dimension of $Z$ is strictly larger than $1$. Let $s \in S(\calO_Z)$ be generically coherent. Then $s$ is coherent.
\end{lemma}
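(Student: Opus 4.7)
The plan is two-fold. First, extract from the generic coherence of $s$ that the witnessing section $m \in \calO_U$ is actually a global section, i.e.\ $m \in \calO_Z$. Second, deduce $s = [m]$ from the already-proved Lemma~\ref{lem torsion coherent}.

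For the first step, normality of $Z$ gives $\calO_Z = \bigcap_{\frakp} \calO_{Z,\frakp}$ over height-one primes, so it suffices to show $v_{\frakp}(m) \geq 0$ for every such $\frakp$. Primes $\frakp$ with $f \notin \frakp$ are trivial since then $m \in \calO_Z[1/f] \subset \calO_{Z,\frakp}$. Fix $\frakp$ with $f \in \frakp$, set $V := V(\frakp)$, and pick a closed point $p \in V$ which is smooth on both $V$ and $Z$ and at which the ``leading Laurent coefficient'' of $m$ along $V$ is defined and nonzero. The key construction is a closed irreducible reduced curve $C \subset Z$ through $p$, transverse to $V$ at $p$ and not contained in $V(f)$: lift a regular system of parameters of $\calO_{Z,p}$ with $y_1$ cutting out $V$ locally to elements $y_1, \ldots, y_d \in \calO_Z$, and take $C$ to be the irreducible component through $p$ of the closed subscheme $V(y_2, \ldots, y_d) \subset Z$. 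A local analysis at $p$ gives $\dim C = 1$ with $y_1$ a local uniformizer of $\calO_{C,p}$, while $f|_C$ is nonzero near $p$ (as $f = y_1^e \cdot \mathrm{unit}$ locally at $p$, $e \geq 1$).

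Now $\dim C = 1 < \dim Z$ gives $C \in \calI_Z$ and hence $s_C \in \calO_C$; moreover $C \cap U$ is dense in $C$. For every $0$-dimensional closed $W \subset C \cap U$, compatibility of $s$ and generic coherence give $s_C|_W = s_W = m|_W$; as $\calO_{C \cap U}$ is a reduced finitely generated $\bbC$-algebra, this upgrades to $s_C|_{C \cap U} = m|_{C \cap U}$. So $m$, viewed as a rational function on $C$, is regular at $p$. If however $v_V(m) = -k < 0$, write $m = u \cdot y_1^{-k}$ in $\calO_{Z,\frakp}$ with $u$ a unit there; the choice of $p$ ensures $u \in \calO_{Z,p}^\times$, whence $m|_C = (u|_C) \cdot y_1^{-k}$ with $u|_C \in \calO_{C,p}^\times$ and $y_1$ a uniformizer of $\calO_{C,p}$---contradicting regularity at $p$. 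Hence $v_V(m) \geq 0$, and $m \in \calO_Z$.

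For the second step, set $s' := s - [m] \in S(\calO_Z)$. Given $W \in \calI_Z$ and a $0$-dimensional closed subscheme $W'' \subset W \cap U$, one has $W'' \subset U$, so by generic coherence $s_{W''} = m|_{W''}$, whence $s'_{W''} = 0$; compatibility then gives $(s'_W)|_{W''} = 0$. Since in the finitely generated $\bbC$-algebra $\calO_{W \cap U}$ the intersection of all cofinite ideals is zero (Krull applied to powers of a maximal ideal separating any nonzero element), this forces $(s'_W)|_{W \cap U} = 0$ for every $W \in \calI_Z$. Lemma~\ref{lem torsion coherent} then yields $s' = [m']$ for some $m' \in \calO_Z$; since $m'|_W = 0$ for every $0$-dimensional $W \subset U$, we have $m'|_U = 0$, and injectivity of $\calO_Z \hookrightarrow \calO_U$ (from $f$ being a nonzerodivisor) forces $m' = 0$, so $s = [m]$ is coherent.

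The principal obstacle is the first step: carefully building the global transverse curve $C$ (closed, irreducible, reduced, of dimension one, transverse to $V$ at a well-chosen point $p$, and not contained in $V(f)$) and verifying all its properties---this is where dimension $> 1$ and normality really enter. A formally appealing alternative would be to assemble the compatible sections $s_{V^{(n)}} \in \calO_Z / \frakp^n$ into an element of the $\frakp$-adic completion $\widehat{\calO_{Z,\frakp}} \cong k(V)[[\pi]]$ and compare it to the image of $m$ in $k(V)((\pi))$, but relating the two still seems to demand a similar slice construction.
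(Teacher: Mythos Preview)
Your proof is correct and follows essentially the same two-step strategy as the paper: first extend the generic witness $m \in \calO_U$ to a global section of $\calO_Z$ via a curve argument exploiting normality and $\dim Z > 1$, then subtract it off and invoke Lemma~\ref{lem torsion coherent}. The paper organizes the extension step a bit differently---writing $m = a/f^n$ and showing directly that $f \mid a$ by checking that $a$ vanishes on the smooth locus of $\{f=0\}$ (using, at each such point $z$, any curve $C$ through $z$ meeting $Z_f$, together with the identity $a|_C = f^n s_C$)---rather than working height-one-prime-by-prime with the DVR description of normality and a transverse slice, but the underlying mechanism (produce a curve in $\calI_Z$ through the point, compare $m$ with the regular section $s_C$ on $C\cap Z_f$) is the same.
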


\begin{proof}
	Let $0 \neq f \in \calO_Z$ and $b\in \calO_{Z_f}$ be such that, for every $0$-dimensional closed subscheme $W \subset Z_f$, we have $b|_{W} = s_W$.
	
	\medskip
	
	If there exists $a \in \calO_Z$ such that $a|_{Z_f} = b$, then consider $s^{\prime} := s - [a]$ (where $[a]$ is the image of $a$ under $\calO_Z \to S(\calO_Z)$). It is clear that, for every $W \in \calI_Z$, $(s^{\prime}_W)|_{W \cap Z_f} = 0$. Hence, by Lemma \ref{lem torsion coherent}, $s^{\prime}$ is coherent, and therefore so is $s$.
	
	\medskip
	
	It is therefore left to see that $b$ can be extended from $Z_f$ to $Z$. We write $b = \frac{a}{f^n}$ for some $a \in \calO_Z$ and $n \in \bbZ_{\ge 1}$. It is enough to show that $f$ divides $a$ in $\calO_Z$, since then we can decrease $n$ and, repeating, eventually get to $n = 0$.
	
	\medskip
	
	Let $U \subset Z$ be affine such that $U \cap \{ f = 0\}$ is smooth (here $\{ f = 0\}$ is the scheme-theoretical vanishing locus of $f$). It is enough to see that $a|_U$ is divisible by $f|_U$ since then the rational function $\frac{a}{f}$ on $Z$ is regular on an open subset whose complement is of codimension at least $2$, and therefore, as is well known, $\frac{a}{f}$ is in fact regular on the whole of $Z$, showing that $f$ divides $a$ in $\calO_Z$.
	
	\medskip
	
	Let $z$ be a closed point of $U \cap \{ f = 0\}$. Since the dimension of $U$ is bigger than $1$, we can find a closed subscheme $C \subset U$ which is irreducible of dimension $1$ passing through $z$ and with $C \cap Z_f \neq \emptyset$. Then, since $a|_C$ and $f^n s_C$ are equal when restricted to all $0$-dimensional closed subschemes of $C \cap Z_f$, and so equal on $C \cap Z_f$ which is dense in $C$, they must be equal - $a|_C = f^n s_C$. In particular, $a$ vanishes at $z$. We conclude that $a$ vanishes at all closed points of $U \cap \{ f = 0\}$. Since $U \cap \{ f = 0\}$ is smooth, this implies that $a|_U$ is divisible by $f|_U$.
\end{proof}

\end{document}